\DeclareRobustCommand*{\mfaktor}[3][]
{
   { \mathpalette{\mfaktor@impl@}{{#1}{#2}{#3}} }
}
\newcommand*{\mfaktor@impl@}[2]{\mfaktor@impl#1#2}
\newcommand*{\mfaktor@impl}[4]{
   \settoheight{\faktor@zaehlerhoehe}{\ensuremath{#1#2{#3}}}
   \settoheight{\faktor@nennerhoehe}{\ensuremath{#1#2{#4}}}
      \raisebox{-0.5\faktor@zaehlerhoehe}{\ensuremath{#1#2{#3}}}
      \mkern-4mu\diagdown\mkern-5mu
      \raisebox{0.5\faktor@nennerhoehe}{\ensuremath{#1#2{#4}}}
}
\newenvironment{innerproof}[1][\proofname]
  {\par\normalfont \topsep6\p@ \@plus6\p@\relax
  \trivlist
  \item[\hskip\labelsep\itshape#1\@addpunct{.}]\ignorespaces}
  {\endtrivlist\@endpefalse}
\theoremstyle{definition}
\newtheorem{Definition}{Definition}[section]
\newtheorem{Remark}[Definition]{Remark}
\newtheorem{Example}[Definition]{Example}
\newtheorem{Warning}[Definition]{Warning}
\theoremstyle{plain}
\newtheorem{Theorem}[Definition]{Theorem}
\newtheorem{Lemma}[Definition]{Lemma}
\newtheorem{Proposition}[Definition]{Proposition}
\newtheorem{Corollary}[Definition]{Corollary}
\newtheorem*{Claim}{Claim}
\DeclareMathOperator{\Hom}{Hom}
\DeclareMathOperator{\End}{End}
\DeclareMathOperator{\Image}{Im}
\DeclareMathOperator{\degree}{deg}
\DeclareMathOperator{\Rep}{Rep}
\DeclareMathOperator{\Vect}{Vect}
\DeclareMathOperator{\Kspan}{span}
\newcommand{\identity}{\textnormal{id}}
\newcommand{\bC}{\mathbb{C}}
\newcommand{\bF}{\mathbb{F}}
\newcommand{\bL}{\mathbb{L}}
\newcommand{\bN}{\mathbb{N}}
\newcommand{\bZ}{\mathbb{Z}}
\title{Scalar extensions of quiver representations over $\mathbb{F}_1$}
\author{Markus Kleinau}
\address{Mathematical Institute of the University of Bonn, Endenicher Allee 60, 53115 Bonn, Germany}
\email{mkleinau@math.uni-bonn.de}
\begin{document}

\begin{abstract}
    Let $V$ and $W$ be quiver representations over $\mathbb{F}_1$ and let $K$ be a field. The scalar extensions $V^K$ and $W^K$ are quiver representations over $K$ with a distinguished, very well-behaved basis. We construct a basis of $\Hom_{KQ}(V^K,W^K)$ generalising the well-known basis of the morphism spaces between string and tree modules. We use this basis to give a combinatorial characterisation of absolutely indecomposable representations. Furthermore, we show that indecomposable representations with finite nice length are absolutely indecomposable. This answers a question of Jun and Sistko.
\end{abstract}
\keywords{the field with one element, representation of quivers, absolutely indecomposable, coefficient quiver}
\subjclass{16G20 (05E10, 57M10)}

\maketitle

\setcounter{tocdepth}{1}
\tableofcontents

\section{Introduction}
    Some important classes of representations over bound quivers admit a very well-behaved basis. Namely one where the arrows send basis elements to other basis elements or zero and no arrow sends two different basis elements to the same basis element. Examples include tree and string modules, permutation representations, interval modules over incidence algebras, projective and injective modules over monomial algebras and laminated modules over preprojective algebras. The notion of quiver representations over $\bF_1$, as introduced by Szczesny \cite{SZCZESNY}, is designed to capture the combinatorial structure of these bases.
    
    An $\bF_1$-vector space is a pointed set. The distinguished point is the zero element while the remaining elements form a basis of the vector space. The $\bF_1$-linear maps are pointed maps satisfying an injectivity condition. We let $Q$ be a quiver and $K$ be a field. Representations of $Q$ over $\bF_1$ are defined in analogy to representations over $K$. There is a scalar extension functor that turns a representation $V$ of $Q$ over $\bF_1$ into a representation $V^K$ of $Q$ over $K$. If $V$ was obtained from a basis as above then $V^K$ is isomorphic to the original representation. The category of quiver representations over $\bF_1$ behaves in many ways like a non-additive version of a module category: It has kernels, cokernels, direct sums and tensor products. In addition it satisfies versions of the Jordan-Hölder and Krull-Schmidt theorems. This was first observed by Szczesny \cite{SZCZESNY}.

    The goal of this paper is to study aspects of the relation between the combinatorics of $\bF_1$-representations and the representation theory of their scalar extensions. Specifically we will describe a basis of the space of homomorphisms between two scalar extensions of $\bF_1$-representations and characterise when all scalar extensions of such a representation are indecomposable. In particular, this answers question 6.14 in \cite{JS2}.
    
    Jun and Sistko \cite{JS1} reinterpreted the data of an $\bF_1$-representation $V$ as a labeled graph $\Gamma_V$ called the coefficient quiver of $V$. This perspective provides a powerful dictionary from representation theory to graph theory. For example, the $\bF_1$-representation $V$ is indecomposable if and only if $\Gamma_V$ is connected. Further, $V$ is simple if and only if $\Gamma_V$ is strongly connected.

    We let $V$ and $W$ be two representations of $Q$ over $\bF_1$. The first goal of the paper is to construct a basis of  $\Hom_{KQ}(V^K,W^K)$. This basis will be indexed by a subset of the direct summands of $V\otimes W$ that we call admissible components. If $\Gamma_V$ and $\Gamma_W$ are trees then this basis agrees with the one introduced by Crawley-Boevey \cite{CB} for tree modules. In particular, we obtain a combinatorial description of $\End_{KQ}(V^K)$. 

    An $\bF_1$-representation $V$ is absolutely indecomposable if $V^K$ is indecomposable for all fields $K$. Our second goal is to use the basis above to give a combinatorial characterisation of absolutely indecomposable representations. To that end, we define and study a special class of the admissible components in $\Gamma_{V\otimes V}$ that we call covering components. This yields the following theorem:
    \begin{Theorem}
        Let $V$ be an indecomposable representation of $Q$ over $\bF_1$. Then the following are equivalent:
        \begin{enumerate}[label = (\roman*)]
            \item $V$ is absolutely indecomposable,
            \item $V^\bC$ is indecomposable,
            \item $\Gamma_{V\otimes V}$ contains exactly one covering component.
        \end{enumerate}
    \end{Theorem}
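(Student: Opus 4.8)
The implication $(i)\Rightarrow(ii)$ is immediate, so the content lies in $(ii)\Rightarrow(iii)$ and $(iii)\Rightarrow(i)$. My plan is to read both off the algebra $A_K:=\End_{KQ}(V^K)$ through the basis $\{f_C\}$ of admissible components $C$ of $\Gamma_{V\otimes V}$ constructed above. Since $\Gamma_V$ is finite, $V^K$ is finite-dimensional, so $V^K$ is indecomposable if and only if $A_K$ is local. Let $C_0$ be the diagonal component, so that $f_{C_0}=\identity$, and let $N_K\subseteq A_K$ be the $K$-span of those $f_C$ for which $C$ is \emph{not} a covering component.

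The technical heart of the argument is the following claim, to be proved for every field $K$: the subspace $N_K$ is a nilpotent two-sided ideal of $A_K$, and for every covering component $C$ the morphism $f_C$ is an automorphism of $V^K$ whose order is finite and bounded only in terms of $\Gamma_V$ (so in particular $f_C$ is semisimple over $\bC$). Granting this, $N_K\subseteq\operatorname{rad}A_K$, the quotient $B_K:=A_K/N_K$ has the images of the covering components as a $K$-basis, $A_K$ is local if and only if $B_K$ is local, and each $\overline{f_C}$ with $C$ covering is a unit of finite order in $B_K$.

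$(iii)\Rightarrow(i)$: if $C_0$ is the only covering component then $B_K=K\cdot\overline{\identity}\cong K$ for every field $K$, so $A_K$ is local and $V^K$ is indecomposable; hence $V$ is absolutely indecomposable. For $(ii)\Rightarrow(iii)$ I argue by contraposition: suppose there is a covering component $C_1\neq C_0$. If $V^\bC$ were indecomposable, $A_\bC$ and hence $B_\bC$ would be local; write $B_\bC=\bC\oplus J$ with $J$ the radical and $\overline{f_{C_1}}=\lambda+n$, $\lambda\in\bC$, $n\in J$. Invertibility of $f_{C_1}$ forces $\lambda\neq 0$, and since $\overline{f_{C_1}}$ has finite order while $n$ is nilpotent and $\operatorname{char}\bC=0$, necessarily $n=0$; then $f_{C_1}-\lambda f_{C_0}$ lies both in $N_\bC$, the span of the non-covering $f_C$, and in the span of $f_{C_0},f_{C_1}$, forcing $f_{C_1}=\lambda f_{C_0}$ and contradicting linear independence of $\{f_C\}$. (Equivalently, and more concretely: $f_{C_1}$ is a non-scalar finite-order automorphism of $V^\bC$, hence diagonalizable with at least two distinct eigenvalues, and its eigenspaces are subrepresentations giving a nontrivial decomposition of $V^\bC$.)

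I expect the key claim to be the main obstacle, in two parts. Nilpotence of $N_K$ should come from a well-founded complexity measure on admissible components that strictly decreases under composition whenever a non-covering component is involved --- the role of the ``covering'' condition being precisely to permit composites not to shrink --- so that sufficiently long products of elements of $N_K$ vanish; here one must check both that $f_Cf_{C'}$ with a non-covering factor expands in non-covering components only, and that the measure genuinely drops. The finite-order statement should follow by identifying a covering component of $\Gamma_{V\otimes V}$ with an automorphism of the coefficient quiver $\Gamma_V$ (or a suitable variant of one), so that $f_C$ permutes the distinguished basis; finiteness of $\Gamma_V$ then bounds the orders uniformly and shows that composition of covering components descends, modulo $N_K$, to composition of these automorphisms. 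The remainder is bookkeeping with the basis $\{f_C\}$ together with standard facts about local finite-dimensional algebras.
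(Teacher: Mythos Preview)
Your plan for $(iii)\Rightarrow(i)$ is essentially the paper's: when $C_0$ is the only covering component, one shows that the span $N_K$ of the remaining $\mathbf{b}^C$ consists of nilpotent endomorphisms, and then $A_K=K\cdot\identity\oplus N_K$ is local. The paper carries this out via a pigeonhole argument on chains $(v_0,\dots,v_n)$ with $(v_{i-1},v_i)\in C^i_0$, using Proposition~\ref{cover:criterium} to derive a contradiction; your ``complexity measure that strictly decreases'' is a reasonable heuristic for the same phenomenon, though you should note that the paper only needs every element of $N_K$ to be nilpotent, not that $N_K$ be an ideal.

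The genuine gap is in $(ii)\Rightarrow(iii)$. Your key claim---that for a covering component $C$ the map $\mathbf{b}^C$ permutes the distinguished basis and hence has finite order---is false. A covering component is \emph{not} in general the graph of an automorphism of $\Gamma_V$: the projections $\pi_1,\pi_2$ can have degree $>1$. Example~\ref{cover:example} exhibits a covering component $C_1$ of degree~$2$; there $\mathbf{b}^{C_1}$ is the $3\times 3$ matrix $J-I$ (all ones off the diagonal), with eigenvalues $2,-1,-1$, so it is neither a permutation of the basis nor of finite order, and since there are no non-covering admissible components one even has $N_\bC=0$, so nothing improves after passing to $B_\bC$. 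Both your main argument (finite order of $\overline{f_{C_1}}$ in $B_\bC$) and your parenthetical alternative (diagonalizable because finite order) break on this example.

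What survives is the \emph{conclusion} that $\mathbf{b}^{C_1}$ has at least two distinct eigenvalues over $\bC$; you just need a different reason. The paper supplies one: the matrix of $\mathbf{b}^{C_1}$ in the distinguished basis is the adjacency matrix of a $k$-regular directed graph $G$ on $(\Gamma_V)_0$ (with $k=\deg\pi_1$), and since $C_1\neq C_\identity$ the graph has no self-loops, so each connected component has at least two vertices; the Perron--Frobenius theorem then forces at least two distinct eigenvalues. Replacing your finite-order claim by this Perron--Frobenius step repairs the argument.
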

    In this case, the unique covering component corresponds to the identity in our basis of $\End_{KQ}(V^K,V^K)$ for all fields $K$. This criterion can be tested efficiently in practice: All covering components can be found in $O(m^2)$ time where $m$ is the number of arrows in $\Gamma_V$.

    Absolutely indecomposable representations over finite fields were first studied by Kac \cite{Kac}, see also \cite{AbsInd}. Their main results describe how the number of absolutely indecomposable representations depend on the number of elements of the underlying field. Unfortunately, those formulas usually give the wrong result for $\bF_1$. A more detailed discussion of this can be found at the end of section 6 in \cite{JS2}.

    Jun and Sistko \cite{JS2} introduced the notion of finite nice length for representations over $\bF_1$. They showed that given a representation $V$ of finite nice length, computing the Euler characteristic of the quiver Grassmannians of $V^\bC$ becomes a combinatorial problem. To do so they adapted an approach developed by Cerulli-Irelli \cite{CI} and \cite{Haupt} to the language of $\bF_1$-representations. We apply our previous characterisation to obtain the following theorem.
    \begin{Theorem}
        Let $V$ be an indecomposable representation over $\bF_1$ with finite nice length. Then $V$ is absolutely indecomposable.
    \end{Theorem}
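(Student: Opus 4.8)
The plan is to deduce the statement from the previous theorem. By the implication (ii) $\Rightarrow$ (i) there it suffices to prove that $V^{\bC}$ is indecomposable, and I would in fact prove that $\End_{KQ}(V^{K})$ is local for every field $K$. Note that finite nice length forces $V$, and hence $V^{K}$, to be finite-dimensional, so $\End_{KQ}(V^{K})$ is a finite-dimensional $K$-algebra; its combinatorial basis constructed earlier, which contains $\identity$, will be used only at the very end.

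First I would promote the hypothesis to a grading. A representation of finite nice length admits a nice composition series, and by refining it --- following the approach of Jun and Sistko \cite{JS2}, adapting \cite{CI,Haupt} --- one obtains a $\bZ^{k}$-grading of $V^{K}$ together with a grading of the arrows of $Q$ that is \emph{compatible}, in the sense that each arrow is homogeneous of its prescribed degree, and \emph{separating}, in the sense that distinct basis vectors of $\Gamma_{V}$ carry distinct degrees. This is the datum underlying the torus action used to compute the Euler characteristics of the quiver Grassmannians of $V^{\bC}$, and the point is that finite nice length is exactly what makes such a grading available. Since the arrows are homogeneous, $\End_{KQ}(V^{K})=\bigoplus_{d}\End^{d}$ is a $\bZ^{k}$-graded algebra, with only finitely many nonzero graded pieces because every degree that occurs is a difference of the finitely many degrees of basis vectors.

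Then two observations finish the argument. First, $\End^{0}=K\cdot\identity$: a degree-zero endomorphism $g$ is diagonal for the separating grading, $g(b)=\lambda_{b}b$ on basis vectors, and commutativity with an arrow of $\Gamma_{V}$ from $b$ to $b'$ forces $\lambda_{b}=\lambda_{b'}$; since $\Gamma_{V}$ is connected --- which is precisely the indecomposability of $V$ --- all the $\lambda_{b}$ agree. Second, $\End^{\neq 0}:=\bigoplus_{d\neq 0}\End^{d}$ is a two-sided ideal contained in the Jacobson radical. Indeed, for $g_{+}\in\End^{e}$ and $g_{-}\in\End^{-e}$ with $e\neq 0$, the product $g_{+}g_{-}\in\End^{0}=K\cdot\identity$ cannot be a nonzero scalar: that would make $g_{+}$ right-invertible, hence invertible, whereas $g_{+}$ is nilpotent because $g_{+}^{m}\in\End^{me}$ leaves the finite support for $m$ large. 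Thus $\End^{e}\cdot\End^{-e}=0$ for all $e\neq 0$, which makes $\End^{\neq 0}$ closed under multiplication and hence a two-sided ideal with $\End_{KQ}(V^{K})/\End^{\neq 0}\cong K$; and every element of $\End^{\neq 0}$ is nilpotent, since in a power $g^{m}$ with $m$ exceeding the size of the support, every monomial term has two equal partial degrees --- all partial degrees along a nonzero term would lie in the finite support --- and hence contains a consecutive sub-product of degree $0$, which vanishes by the previous line. A finite-dimensional algebra whose quotient by a nil two-sided ideal is a field is local, so $V^{\bC}$ is indecomposable and, by the previous theorem, $V$ is absolutely indecomposable; one then identifies the diagonal as the unique covering component of $\Gamma_{V\otimes V}$, as asserted after that theorem.

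I expect the main obstacle to be the construction of the grading in the second paragraph: producing from finite nice length a grading that is at once compatible with the arrows of $Q$ and separating on the coefficient quiver. A single $\bZ$-grading is generally not separating, which forces the passage to a $\bZ^{k}$-grading, and the argument that one exists rests on the acyclicity encoded in a nice composition series together with the injectivity condition in the definition of $\bF_{1}$-representations. That this last condition is genuinely needed is shown by examples of representations that are indecomposable over $\bF_{1}$ and carry short nice composition series yet become decomposable after scalar extension; the ring-theoretic part of the third paragraph, by contrast, is routine once the grading is in hand.
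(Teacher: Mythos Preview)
Your proof has a genuine gap at exactly the step you flag as the main obstacle, and the gap is fatal rather than merely technical. A $\bZ^{k}$-grading for which every arrow of $Q$ is homogeneous is nothing but a $k$-tuple of $()$-nice $\bZ$-gradings, and projecting along a generic integer linear form produces a single $()$-nice grading that still separates the basis vectors; so the grading you postulate would force the nice length to be $0$. But nice length can be exactly $1$. Over $\bL_2$ with loops $a,b$, take $\Gamma_V$ on vertices $1,\dots,5$ with $a$-arrows $1\to 2$, $2\to 3$, $3\to 4$ and $b$-arrows $2\to 1$, $4\to 5$. Every $()$-nice grading $\partial_0$ has $d_b=-d_a$ (from the $2$-cycle on $1,2$) and hence $\partial_0(5)=\partial_0(4)+d_a=\partial_0(3)$, so no $()$-nice grading separates $3$ from $5$ and the nice length is positive. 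With $d_a=1$ the values are $0,-1,-2,-3,-2$, so no two same-coloured arrows share a $\partial_0$-source; the $(\partial_0)$-niceness condition on $\partial_1$ is then vacuous and any injective $\partial_1$ completes the separation, giving nice length $1$. For this $V$ no compatible separating $\bZ^{k}$-grading exists, and your graded-endomorphism-ring argument cannot start. The underlying issue is structural: the niceness condition on $\partial_{i}$ for $i\geq 1$ only constrains pairs of same-coloured arrows whose sources and targets already agree under $\partial_0,\dots,\partial_{i-1}$, which is strictly weaker than global homogeneity; so $\End_{KQ}(V^K)$ is not in general graded by the map $(\partial_0,\dots,\partial_n)$.

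The paper argues the contrapositive through the covering-component criterion. If $V$ is not absolutely indecomposable, there is a covering component $C\neq C_{\identity}$ in $\Gamma_{V\otimes V}$, and the projections $\pi_1,\pi_2:C\to\Gamma_V$ are used (in two cases, according to whether $\degree(\pi_1)=1$ or $\geq 2$) to produce a string $E$ in $\Gamma_V$ that is not a cycle together with a completion $D$ such that $c(D)=c(E)^s$; a separate proposition then shows directly, by a telescoping induction on the nice sequence, that such a string forces every nice sequence to identify $s(E)$ with $t(E)$. Your ring-theoretic paragraph is correct and gives a pleasant proof in the nice-length-$0$ case, but it does not extend to positive nice length as written; an inductive scheme that passes to the unrolled quiver determined by $\partial_0$ might salvage the strategy, but that is a different argument from the one you outline.
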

    
    Sections 2 and 3 recall the basic theory of quiver representations over $\bF_1$ and their coefficient quivers. Section 4 constructs the basis of the homomorphism spaces between scalar extensions. Section 5 prepares the theory of covering components which we need to prove the characterisation of absolutely indecomposable representations in section 6. The final section contains the proof that indecomposable finite nice length representations are absolutely indecomposable.
    
\section{Quiver representations over \texorpdfstring{$\bF_1$}{F1}}
    We start by recalling the category $\Rep(Q,\bF_1)$ and its basic properties following \cite{SZCZESNY} and \cite{JS2}.
    \begin{Definition}
        A \textit{vector space} over $\bF_1$ is a finite pointed set $(V,0_V)$. Let $(V,0_V)$ and $(W,0_W)$ be $\bF_1$-vector spaces. An $\bF_1$-\textit{linear map} is a pointed map $f\colon V\rightarrow W$ such that $f$ is injective on $V\backslash f^{-1}(0_W)$. This defines a category called $\Vect(\bF_1)$.
    \end{Definition}
    By abuse of notation, we will usually refer to an $\bF_1$-vector space $(V,0_V)$ just by $V$. The subset $V\backslash\{0_V\}$ should be interpreted as a basis of $V$. The category $\Vect(\bF_1)$ is not additive yet it shares many properties with the categories of vector spaces over actual fields. We adapt some of the terms from linear algebra.
    \begin{Definition}
        Let $V$ and $W$ be $\bF_1$-vector spaces and let $f\colon V\rightarrow W$ be an $\bF_1$-linear map.
        \begin{enumerate}[label = (\roman*)]
            \item The \textit{dimension} of $V$ is given by $\dim_{\bF_1}(V) = |V\backslash\{0_V\}|$.
            \item $V$ is a \textit{subspace} of $W$ if $V$ is a pointed subset of $W$.
            \item If $V$ is a subspace of $W$ then the \textit{quotient} $W/V$ is the pointed set $(W\backslash V)\cup \{0_W\}$.
            \item The \textit{kernel} of $f$ is the subspace $f^{-1}(0_W)$ of $V$. The \textit{cokernel} of $f$ is the quotient $W/\Image(f)$.
            \item There is a unique $0$-map from $V$ to $W$ given by sending all elements in $V$ to $0_W$.
            \item The \textit{direct sum} of $V$ and $W$ is the pointed set $V\oplus W =V\sqcup_{0_V\sim 0_W}W$.
            \item The \textit{tensor product}  is the pointed set $V\otimes W =V\times W /(V\times \{0_W\}\cup\{0_V\}\times W )$
            \item The \textit{dual map} $f^t\colon W\rightarrow V$ is given by 
            \[f^t(w) = \begin{cases}
                v &\textnormal{if }w \neq 0_w\textnormal{ and }f^{-1}(w) = \{v\}\\
                0_V &\textnormal{else.}
            \end{cases}\]
        \end{enumerate}
    \end{Definition}
    This gives the category $\Vect(\bF_1)$ enough structure to define exact sequences. In fact it forms a proto-exact category in the sense of \cite{Segal}. 
    \begin{Warning}
        The direct sum $V\oplus W$ is not a categorical biproduct since $\Vect(\bF_1)$ admits neither categorical products nor coproducts.
    \end{Warning}
    Next we want to consider quiver representations over $\bF_1$.
    \begin{Definition}
        A \textit{quiver} is a quadruple $Q=(Q_0,Q_1,s,t)$ where $Q_0$ is the set of vertices and $Q_1$ is the set of arrows while $s$ and $t$ are maps from $Q_1$ to $Q_0$ sending an arrow to its source, respectively its target. We always assume that $Q_0$ and $Q_1$ are finite.
    \end{Definition}
    \begin{Example}
        The $n$-loop quiver $\mathbb{L}_n$ has one vertex and $n$ loops at that vertex.
    \end{Example}
    We fix a quiver $Q$ for the rest of the paper.
    \begin{Definition}
        Let $K$ be a field or $\bF_1$. A \textit{quiver representation} of $Q$ over $K$ is a tuple $V = ((V_i)_{i\in Q_0},(f_\alpha)_{\alpha\in Q_1})$. Here, $V_i$ is a finite dimensional $K$-vector space for each $i\in Q_0$ and $f_\alpha\colon V_{s(\alpha)}\rightarrow V_{t(\alpha)}$ is a $K$-linear map for each $\alpha\in Q_1$. Let $V$ and $W$ be representations of $Q$ over $K$. A \textit{homomorphism of quiver representations} $f\colon V\rightarrow W$ is a family of maps $(f_i)_{i\in Q_0}$ where each $f_i$ goes from $V_i$ to $W_i$ and for each arrow $\alpha\in Q_1$ the following diagram commutes:
        \[\begin{tikzcd}
        	{V_{s(\alpha)}} & {W_{s(\alpha)}} \\
        	{V_{t(\alpha)}} & {W_{t(\alpha)}.}
        	\arrow["{f_{s(\alpha)}}", from=1-1, to=1-2]
        	\arrow["{W_\alpha}", from=1-2, to=2-2]
        	\arrow["{V_\alpha}"', from=1-1, to=2-1]
        	\arrow["{f_{t(\alpha)}}"', from=2-1, to=2-2]
        \end{tikzcd}\]
        The quiver representations of $Q$ over $K$ form the category $\Rep(Q,K)$. If $K$  is not $\bF_1$ then we denote the space of homomorphisms by $\Hom_{KQ}(V,W)$.
    \end{Definition}
    As before many concepts from representation theory can be adapted to the $\bF_1$-setting. The category $\Rep(Q,\bF_1)$ satisfies versions of the Krull-Schmidt and Jordan-Hölder theorems, as proven in \cite[section 4]{SZCZESNY}. The goal of this paper is to study relations between $\bF_1$-representations and their scalar extensions which will be introduced next.
    \begin{Definition}
        Let $K$ be a field. There is a faithful exact functor \[-\otimes_{\bF_1}K\colon \Vect(\bF_1)\rightarrow\Vect(K)\] called \textit{scalar extension}. It sends an $\bF_1$-vector space $V$ to the free $K$-vector space on $V\backslash\{0_V\}$. We obtain a faithful exact functor \[-\otimes_{\bF_1}K\colon \Rep(Q,\bF_1)\rightarrow\Rep(Q,K)\] by applying the previous functor pointwise. To simplify notation we will write $V^K$ for $V\otimes_{\bF_1}K$. These functors are never full and rarely dense.
    \end{Definition}
\section{Coefficient quivers}
    Coefficient quivers for quiver representations over $\bF_1$ were introduced by Jun and Sistko in \cite{JS1}. They reinterpret an $\bF_1$-representation $V$ of $Q$ as a quiver $\Gamma_V$ with a special map $c\colon \Gamma_V\rightarrow Q$ called a winding. This provides a powerful connection to graph theory.
    \begin{Definition}
        Let $\Gamma$ be a quiver and $c\colon \Gamma\rightarrow Q$ a morphism of quivers. The map $c$ is called a \textit{winding} if for all $\alpha\in Q_1$ and all $a,b\in c^{-1}(\alpha)$ with $a\neq b$ we have $s(a)\neq s(b)$ and $t(a)\neq t(b)$. This condition can be visualised as follows: Consider a subquiver of $\Gamma$ of one of the following forms.
        \[\begin{tikzcd}
        	\bullet & \bullet & \bullet && \bullet & \bullet & \bullet
        	\arrow[from=1-1, to=1-2]
        	\arrow[from=1-3, to=1-2]
        	\arrow[from=1-6, to=1-5]
        	\arrow[from=1-6, to=1-7]
        \end{tikzcd}\]
        Then the two arrows in the subquiver must be mapped to different arrows in $Q_1$.
    \end{Definition}
    It is sometimes useful to interpret a winding $c\colon \Gamma \rightarrow Q$ as a colouring of $\Gamma$. An arrow $b$ with $c(b) = \alpha$ would be considered an $\alpha$-coloured arrow and a vertex $w$ with $c(w) = i$ would be considered an $i$-coloured vertex.
    \begin{Definition}
        Let $V$ be an $\bF_1$-representation of $Q$. The \textit{coefficient quiver} of $V$ is the pair $(\Gamma_V,c_V)$ where $\Gamma_V$ is the following quiver:
        \begin{align*}
            (\Gamma_V)_0 &=\coprod_{i\in Q_0} V_i\backslash\{0_{V_i}\}\\
            (\Gamma_V)_1 &=\coprod_{\alpha\in Q_1} \left\{(v,w)\in ((\Gamma_V)_0)^2\mid V_\alpha(v) = w \right\}\\
            s((v,w)) &= v\\
            t((v,w)) &= w
        \end{align*}
        and $c_V$ is induced by the disjoint unions above.
    \end{Definition}
    The injectivity condition of $\bF_1$-linear maps ensures that $c_V$ is a winding.
    \begin{Example}
        We consider the two loop quiver
        \[\bL_2 = \begin{tikzcd}
        	*
         \arrow["a",color={rgb,255:red,92;green,92;blue,214},loop, in=160, out=200 ,distance=3em, from=1-1, to=1-1]
        \arrow["b",color={rgb,255:red,214;green,92;blue,92},loop, in=-20, out=20 ,distance=3em, from=1-1, to=1-1]
        \end{tikzcd}\]
        and the $\bF_1$-representation $V=(V_*,V_a,V_b)$ given by $V_* = \{1,2\}\cup \{0\}$ and 
        \begin{align*}
            V_a(1) &= 1& V_b(1) &= 2\\
            V_a(2) &= 0& V_b(2) &= 1.
        \end{align*}
        Then the coefficient quiver of $V$ is given by
        \[\Gamma_V =\begin{tikzcd}
        	1 & 2.
        	\arrow[color={rgb,255:red,214;green,92;blue,92}, curve={height=-6pt}, from=1-1, to=1-2]
        	\arrow[color={rgb,255:red,214;green,92;blue,92}, curve={height=-6pt}, from=1-2, to=1-1]
            \arrow[color={rgb,255:red,92;green,92;blue,214},loop, in=160, out=200 ,distance=3em, from=1-1, to=1-1]
        \end{tikzcd}\]
    \end{Example}
    \begin{Definition}
        Let $c\colon \Gamma \rightarrow Q$ be a winding. A full subquiver $H$ of $\Gamma$ is called
        \begin{enumerate}[label = (\roman*)]
            \item \textit{successor closed} if for every arrow $a\in \Gamma$ we have
            \[s(a)\in H\Rightarrow t(a)\in H.\]
            \item \textit{predecessor closed} if for every arrow $a\in \Gamma$ we have
            \[t(a)\in H\Rightarrow s(a)\in H.\]
        \end{enumerate}
    \end{Definition}
    \begin{Proposition}
        Let $V$ and $W$ be $\bF_1$-representations over $Q$.
        \begin{enumerate}[label = (\roman*)]
            \item If $W$ is a subrepresentation of $V$ then $\Gamma_W$ is a successor closed subquiver of $\Gamma_V$.
            \item If $W$ is a quotient representation of $V$ then $\Gamma_W$ is a predecessor closed subquiver of $\Gamma_V$.
            \item $\Gamma_{V\oplus W} = \Gamma_V \sqcup \Gamma_W$.
            \item The indecomposable direct summands of $V$ correspond to the connected components of $\Gamma_V$.         
        \end{enumerate}
    \end{Proposition}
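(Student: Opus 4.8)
The plan is to establish the four statements in the listed order, noting that (iv) is really a consequence of (iii) together with the Krull--Schmidt theorem for $\Rep(Q,\bF_1)$ and the equivalence ``$V$ indecomposable $\iff \Gamma_V$ connected''. All four proofs amount to unwinding the definition of the coefficient quiver, so I would keep them short.

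For (i) and (ii): a subrepresentation $W\subseteq V$ consists of pointed subsets $W_i\subseteq V_i$ with $V_\alpha(W_{s(\alpha)})\subseteq W_{t(\alpha)}$ and $W_\alpha = V_\alpha|_{W_{s(\alpha)}}$. Hence $(\Gamma_W)_0 = \coprod_i W_i\setminus\{0\}$ is a subset of $(\Gamma_V)_0$, and an arrow $(v,w)$ of $\Gamma_V$ with $v\in(\Gamma_W)_0$, say coloured by $\alpha$, satisfies $w = V_\alpha(v)\in W_{t(\alpha)}$, so $w\in(\Gamma_W)_0$ and the arrow already lies in $\Gamma_W$. This shows at once that $\Gamma_W$ is a full subquiver of $\Gamma_V$ and that it is successor closed. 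For (ii) it suffices, since $\Gamma_{(-)}$ is an isomorphism invariant, to treat a quotient $V/U$ by a subrepresentation $U\subseteq V$; here $(\Gamma_{V/U})_0 = (\Gamma_V)_0\setminus(\Gamma_U)_0$, and an arrow $(v,w)$ of $\Gamma_V$ induces a nonzero arrow of $\Gamma_{V/U}$ exactly when neither endpoint lies in $\Gamma_U$. If $w\notin(\Gamma_U)_0$ but $v\in(\Gamma_U)_0$, then closure of $U$ would force $w = V_\alpha(v)\in U$, a contradiction; hence $v\notin(\Gamma_U)_0$, and $\Gamma_{V/U}$ is a full, predecessor closed subquiver.

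For (iii): by definition $(V\oplus W)_i = V_i\sqcup_{0}W_i$ and $(V\oplus W)_\alpha$ restricts to $V_\alpha$ on the $V$-part and to $W_\alpha$ on the $W$-part; in particular it never sends a nonzero element of one part into the other. Hence $\Gamma_{V\oplus W}$ has vertex set $(\Gamma_V)_0\sqcup(\Gamma_W)_0$, no arrow joins the two parts, and the induced subquivers, together with the restrictions of the induced winding, are exactly $\Gamma_V$ and $\Gamma_W$.

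For (iv): it is enough to prove that $V$ is indecomposable if and only if $\Gamma_V$ is connected, since then decomposing $V=\bigoplus_j V^{(j)}$ into indecomposables via Krull--Schmidt and applying (iii) exhibits $\Gamma_V = \bigsqcup_j \Gamma_{V^{(j)}}$ as a disjoint union of connected quivers, i.e.\ as its decomposition into connected components. If $V$ is decomposable, (iii) disconnects $\Gamma_V$. Conversely, if $\Gamma_V$ is disconnected, pick a partition $(\Gamma_V)_0 = S_1\sqcup S_2$ into nonempty unions of connected components and set $V^{(k)}_i = (V_i\cap S_k)\cup\{0\}$; since $S_k$ is a union of whole components, every arrow with source in $S_k$ has target in $S_k$, so $V_\alpha(V^{(k)}_{s(\alpha)})\subseteq V^{(k)}_{t(\alpha)}$, each $V^{(k)}$ is a nonzero subrepresentation, and $V = V^{(1)}\oplus V^{(2)}$. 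The only step that is more than bookkeeping is this last one, namely verifying that the candidate pieces $V^{(k)}$ are genuine subrepresentations; but that is precisely the assertion that $\Gamma_V$ carries no arrow between distinct connected components, which holds by the construction of $\Gamma_V$ (equivalently, by the shape of the subquivers in (iii)). So I expect no real obstacle, only the need to phrase the pointed-set manipulations carefully.
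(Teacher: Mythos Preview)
The paper states this proposition without proof; it is treated as a basic fact about coefficient quivers, presumably deferred to the references \cite{SZCZESNY} and \cite{JS1}. Your argument is correct and supplies exactly the kind of routine verification the paper omits: parts (i)--(iii) are direct unwindings of the definitions of subrepresentation, quotient, direct sum, and coefficient quiver, and your proof of (iv) via ``indecomposable $\Leftrightarrow$ connected'' together with Krull--Schmidt is the standard route. There is nothing to compare against, and no gap in what you wrote.
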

    One can define a category $\mathcal{C}_Q$ of windings over $Q$ that is equivalent to $\Rep(Q,\bF_1)$. The morphisms are based on the following observation: Let $f\colon V\rightarrow W$ be a morphism of quiver representations over $\bF_1$. Then $f$ factors as
    \[V\rightarrow V/\ker(f)\xrightarrow{\overline{f}} \Image(f)\rightarrow W\]
    where $\overline{f}$ is an isomorphism. The triple $(V/\ker(f),\Image(f),\overline{f})$ can be interpreted in the language of windings.
    \begin{Definition}
        Let $c\colon \Gamma\rightarrow Q$ and $c'\colon \Gamma'\rightarrow Q$ be two windings. A \textit{morphism of windings} $\Phi\colon (\Gamma,c)\rightarrow(\Gamma',c')$ is a triple $\Phi=(F,U,\phi)$ where
        \begin{enumerate}[label = (\roman*)]
            \item $F$ is a predecessor closed subquiver of $\Gamma$,
            \item $U$ is a successor closed subquiver of $\Gamma'$,
            \item $\phi\colon F\rightarrow U$ is an isomorphisms with $c'\circ\phi=c$.
        \end{enumerate}
    Let $(F,U,\phi)\colon \Gamma_1 \rightarrow \Gamma_2$ and $(F',U',\psi)\colon \Gamma_2 \rightarrow \Gamma_3$ be two morphisms of windings. Their composition is given by $(F\cap \phi^{-1}(F'),U'\cap\psi(U),\psi\circ\phi)$ where $\psi\circ\phi$ is considered with the appropriate domain and codomain. We obtain a category $\mathcal{C}_Q$ of windings over $Q$.
    \end{Definition}
    \begin{Theorem}[\cite{JS2}]
        The assignment $V\mapsto \Gamma_V$ defines an equivalence of categories
        \[\Rep(Q,\bF_1)\rightarrow \mathcal{C}_Q.\]
    \end{Theorem}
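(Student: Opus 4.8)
The plan is to build the functor on morphisms out of the canonical factorisation, and then establish faithfulness, fullness and essential surjectivity in turn.

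Given a morphism $f\colon V\to W$ in $\Rep(Q,\bF_1)$, factor it as $V\twoheadrightarrow V/\ker f\xrightarrow{\ \overline f\ }\Image f\hookrightarrow W$ with $\overline f$ an isomorphism. By the Proposition above, $\Gamma_{V/\ker f}$ is a predecessor closed subquiver $F$ of $\Gamma_V$ and $\Gamma_{\Image f}$ is a successor closed subquiver $U$ of $\Gamma_W$, while $\overline f$ restricts to an isomorphism of quivers $\phi\colon F\to U$ with $c_W\circ\phi=c_V$; send $f$ to $(F,U,\phi)$. Functoriality is a direct check: $\Gamma(\identity_V)$ is the identity winding morphism, and, using that $\phi_f$ sends a basis vector $v$ to $f(v)$, one sees after unwinding the composition law in $\mathcal C_Q$ that both $\Gamma(g\circ f)$ and $\Gamma(g)\circ\Gamma(f)$ have source-side subquiver the full subquiver of $\Gamma_V$ on $\{v : g(f(v))\neq 0\}$ and agree there. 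Faithfulness is then immediate: the vertex set of $F=\Gamma_{V/\ker f}$ is exactly $\{v\in\Gamma_V : f(v)\neq 0\}$, so $(F,U,\phi)$ recovers $\ker f$, $\Image f$ and $\overline f$, hence recovers $f=\iota\circ\overline f\circ\pi$.

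For fullness, given a morphism of windings $(F,U,\phi)\colon\Gamma_V\to\Gamma_W$, define $f_i\colon V_i\to W_i$ by $f_i(v)=\phi(v)$ for $v\in F$ and $f_i(v)=0$ otherwise; injectivity of $\phi$ makes each $f_i$ an $\bF_1$-linear map. The substantive point is that $(f_i)_i$ commutes with the arrow maps, i.e. $W_\alpha(f_{s(\alpha)}(v))=f_{t(\alpha)}(V_\alpha(v))$ for every $\alpha\in Q_1$ and basis vector $v$. If $v\notin F$, predecessor closedness of $F$ forces the (unique, if it exists) $\alpha$-arrow out of $v$ to have target outside $F$, so both sides vanish. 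If $v\in F$, then on one hand, since $F$ is a full subquiver an $\alpha$-arrow out of $v$ in $\Gamma_V$ lies in $F$ exactly when its target does, in which case $\phi$ carries it to the $\alpha$-arrow out of $\phi(v)$; on the other hand, successor closedness of $U$ together with the winding condition (at most one $\alpha$-arrow out of a given vertex) shows that $\phi(v)$ has an outgoing $\alpha$-arrow in $\Gamma_W$ if and only if $v$ has one lying in $F$. Matching these up makes the square commute in every case, and $\Gamma(f)=(F,U,\phi)$ by construction, so $\Gamma$ is full.

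For essential surjectivity, from a winding $c\colon\Gamma\to Q$ build $V$ by $V_i=c^{-1}(i)\sqcup\{0\}$, with $V_\alpha$ sending a basis vector $v\in V_{s(\alpha)}$ to the target of the unique $\alpha$-coloured arrow out of $v$, or to $0$ if there is none; the winding condition guarantees that this arrow is unique and that $V_\alpha$ is $\bF_1$-linear, and $\Gamma_V\cong(\Gamma,c)$ by construction. Combining the three parts yields the equivalence. I expect the main obstacle to be the commutativity verification in the fullness step: this is the one place where all three parts of the definition of a winding morphism, together with the winding condition on $\Gamma_V$ and $\Gamma_W$, are genuinely combined, and the delicate case is a basis vector lying in $F$ whose image under some arrow leaves $F$.
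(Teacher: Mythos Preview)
The paper does not supply its own proof of this theorem; it is quoted as a result of Jun and Sistko \cite{JS2} without argument. Your proof is correct and follows precisely the construction the paper sketches in the paragraph preceding the definition of $\mathcal{C}_Q$ (the epi--iso--mono factorisation of $f$ giving the triple $(F,U,\phi)$), so there is nothing further to compare against; the case analysis in your fullness step, including the delicate subcase $v\in F$ with $V_\alpha(v)\notin F$, goes through exactly as you indicate once one uses that $F$ and $U$ are full subquivers.
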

    This equivalence allows us to freely switch between an $\bF_1$-representation and its coefficient quiver depending on which one is more convenient. The tensor product of two representations will be central to this paper. We study it next.
    \begin{Definition}
        Let $V$ and $W$ be in $\Rep(Q,\bF_1)$. The \textit{tensor product} $V\otimes W$ is given by the vector spaces
        \[(V\otimes W)_i = V_i \otimes W_i = V_i \times W_i/(V_i\times \{0_{W_i}\}\cup \{0_{V_i}\}\times W_i)\]
        and the maps
        \begin{align*}
            (V\otimes W)_\alpha\colon (V\otimes W)_{s(\alpha)}&\rightarrow (V\otimes W)_{t(\alpha)}\\
            v\otimes w &\mapsto V_\alpha(v)\otimes W_\alpha(w).
        \end{align*}
        Here we denote an element $(v,w)\in (V\otimes W)_i$ by $v\otimes w$.
    \end{Definition}
    \begin{Definition}
        Let $c\colon \Gamma \rightarrow Q$ and $c'\colon \Gamma' \rightarrow Q$ be windings. We define the \textit{tensor product of windings} $c\otimes c'\colon \Gamma \otimes \Gamma' \rightarrow Q$ by
        \begin{align*}
            (\Gamma \otimes \Gamma')_0 &= \{(v,w)\in \Gamma_0 \times \Gamma'_0 \mid c(v) = c'(w)\}\\
            (\Gamma \otimes \Gamma')_1 &= \{(a,b)\in \Gamma_1 \times \Gamma'_1 \mid c(a) = c'(b)\}\\
            s((a,b)) &= (s(a),s(b))\\
            t((a,b)) &= (t(a),t(b))\\
            c\otimes c'((v,w)) &= c(v)\\
            c\otimes c'((a,b)) &= c(a).
        \end{align*}
    \end{Definition}
    We will give an example of the tensor product of two windings in Example \ref{cover:example}. Comparing definitions yields:
    \begin{Proposition}
        Let $V,W\in \Rep(Q,\bF_1)$. Then
        \[\Gamma_{V\otimes W} \cong \Gamma_V \otimes \Gamma_W.\]
    \end{Proposition}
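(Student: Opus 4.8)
The plan is to exhibit the evident identification of vertices and arrows and to verify that it is compatible with the source and target maps and with the windings to $Q$; since a quiver together with a winding is nothing more than this data, that suffices. First I would unwind the vertices. By definition of the coefficient quiver, a vertex of $\Gamma_{V\otimes W}$ lying over $i\in Q_0$ is a nonzero element of $(V\otimes W)_i=V_i\otimes W_i$. Because the tensor product in $\Vect(\bF_1)$ is the quotient of $V_i\times W_i$ by $V_i\times\{0_{W_i}\}\cup\{0_{V_i}\}\times W_i$, such an element is precisely a pair $(v,w)$ with $v\in V_i\setminus\{0_{V_i}\}$ and $w\in W_i\setminus\{0_{W_i}\}$. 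On the other side, a vertex of $\Gamma_V\otimes\Gamma_W$ is a pair $(v,w)\in(\Gamma_V)_0\times(\Gamma_W)_0$ with $c_V(v)=c_W(w)$; since $c_V$ sends a basis vector of $V_i$ to $i$, this constraint says exactly that $v$ and $w$ lie over a common vertex. Thus the vertices match bijectively, compatibly with the maps to $Q_0$.

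Next I would treat the arrows. An arrow of $\Gamma_{V\otimes W}$ over $\alpha\in Q_1$ is a pair $\bigl(v\otimes w,\ v'\otimes w'\bigr)$ with $(V\otimes W)_\alpha(v\otimes w)=v'\otimes w'$, and $(V\otimes W)_\alpha(v\otimes w)=V_\alpha(v)\otimes W_\alpha(w)$ is nonzero exactly when $V_\alpha(v)\neq 0$ and $W_\alpha(w)\neq 0$. Hence such an arrow exists if and only if $\Gamma_V$ has an $\alpha$-coloured arrow $v\to v'$ and $\Gamma_W$ has an $\alpha$-coloured arrow $w\to w'$, which is precisely the description of $(\Gamma_V\otimes\Gamma_W)_1$ as pairs $(a,b)$ of arrows with $c_V(a)=c_W(b)$. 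The formulas $s((a,b))=(s(a),s(b))$ and $t((a,b))=(t(a),t(b))$ then agree with the source and target in $\Gamma_{V\otimes W}$ under the vertex bijection, and both quivers carry the same colouring over $\alpha$. Assembling these identifications yields an isomorphism $\Gamma_{V\otimes W}\xrightarrow{\ \sim\ }\Gamma_V\otimes\Gamma_W$ of windings over $Q$.

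I do not expect a genuine obstacle here; the only points needing a moment's care are the explicit description of the nonzero elements of a tensor product of $\bF_1$-vector spaces and the observation that $(V\otimes W)_\alpha$ annihilates $v\otimes w$ precisely when one of $V_\alpha(v)$, $W_\alpha(w)$ vanishes, both of which are immediate from the definition of $\otimes$ in $\Vect(\bF_1)$. A slicker but less transparent alternative would be to transport along the equivalence $\Rep(Q,\bF_1)\simeq\mathcal{C}_Q$ and check that the tensor product of representations corresponds on objects to the tensor product of windings; the direct comparison above seems preferable.
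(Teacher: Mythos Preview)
Your proposal is correct and is precisely the ``comparing definitions'' argument the paper invokes without spelling out: the paper offers no proof beyond that phrase, and you have simply unwound both sides and matched vertices, arrows, source/target, and colourings. There is nothing to add.
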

    
\section{Morphisms between scalar extensions}
    Let $V,W\in \Rep(Q,\bF_1)$ and let $K$ be a field. We want to use the combinatorial structure of the $\bF_1$-representations to obtain a basis of $\Hom_{KQ}(V^K,W^K)$. Our approach is inspired by \cite{Krause}. By definition we have an inclusion
    \[\Hom_{KQ}(V^K,W^K)\subset \prod_{i\in Q_0}\Hom_K(V^K_i,W^K_i).\]
    The representations $V^K$ and $W^K$ carry distinguished bases given by the non-zero elements in $V$ respectively $W$. For $i\in Q_0$, $v\in V_i\backslash\{0_{V_i}\}$, $w\in W_i\backslash\{0_{W_i}\}$ we define the linear map $\mathbf{b}_{v,w}\colon  V^K_i\rightarrow W^K_i$ by 
        \[\mathbf{b}_{v,w}(v')=\delta_{v,v'}w\]
    for all $v'\in V_i\backslash\{0_{V_i}\}$. Then the set
    \[\mathcal{B}_{V,W}=\left\{\mathbf{b}_{v,w} \in \Hom_K(V^K_i,W^K_i) \mid i\in Q_0, v\in V_i\backslash\{0_{V_i}\}, w\in W_i\backslash\{0_{W_i}\}\right\}\]
    forms a basis of $\prod_{i\in Q_0}\Hom_K(V^K_i,W^K_i)$. For an element $f\in \Hom_{KQ}(V^K,W^K)$ we define its coefficients $f_{v,w}$ by
    \[f = \sum_{\mathbf{b}_{v,w}\in \mathcal{B}_{V,W}} f_{v,w}\mathbf{b}_{v,w}.\]
    To simplify notation we set $f_{0,w} = f_{v,0} = 0$.
    \begin{Definition}
        We define an equivalence relation $\sim$ on $\mathcal{B}_{V,W}$ by $\mathbf{b}_{v,w} \sim\mathbf{b}_{v',w'}$ if and only if for all $f\in \Hom_{KQ}(V^K,W^K)$ we have $f_{v,w} = f_{v',w'}$.
    \end{Definition}
    There is at most one equivalence class consisting of all basis elements whose coefficient is always zero. We call it the zero class. We will show that the non-zero equivalence classes define a basis of $\Hom_{KQ}(V^K,W^K)$. To that end we will derive a combinatorial description of the equivalence relation using the tensor product in $\Rep(Q,\bF_1)$.  The connection is given by the bijection \begin{align*}
        (\Gamma_{V\otimes W})_0 &\rightarrow \mathcal{B}_{V,W}\\
        (v,w)&\mapsto \mathbf{b}_{v,w}.
    \end{align*}
    \begin{Example}\label{Hom:exampleone}
        We consider the two loop quiver
        \[\bL_2 = \begin{tikzcd}
        	*
         \arrow["a",color={rgb,255:red,92;green,92;blue,214},loop, in=160, out=200 ,distance=3em, from=1-1, to=1-1]
        \arrow["b",color={rgb,255:red,214;green,92;blue,92},loop, in=-20, out=20 ,distance=3em, from=1-1, to=1-1]
        \end{tikzcd}\]
        and two $\bF_1$-representations $V$ and $W$ with coefficient quivers
        \begin{align*}
            \Gamma_V &=\begin{tikzcd}[ampersand replacement=\&]
        	1 \& 2 \& 3
        	\arrow[color={rgb,255:red,92;green,92;blue,214}, from=1-1, to=1-2]
        	\arrow[color={rgb,255:red,214;green,92;blue,92}, from=1-2, to=1-3]
        \end{tikzcd},& \Gamma_W &=
        \begin{tikzcd}[ampersand replacement=\&]
        	4 \& 5.
        	\arrow[color={rgb,255:red,92;green,92;blue,214}, from=1-1, to=1-2]
        \end{tikzcd}
        \end{align*}
        The tensor product $\Gamma_{V\otimes W}$ is given by
        \[\begin{tikzcd}
        	{(1,4)} & {(2,4)} & {(3,4)} \\
        	{(1,5)} & {(2,5)} & {(3,5)}
        	\arrow[color={rgb,255:red,92;green,92;blue,214}, from=1-1, to=2-2]
        \end{tikzcd}\]
        The non-zero equivalence classes are $\{\mathbf{b}_{1,5}\}$ and $\{\mathbf{b}_{1,4},\mathbf{b}_{2,5}\}$.
    \end{Example}
    \begin{Lemma}\label{Hom:comprelation}
        Let $(v,w)$ and $(v',w')$ be in the same connected component of $\Gamma_{V \otimes W}$. Then $\mathbf{b}_{v,w} \sim\mathbf{b}_{v',w'}$
    \end{Lemma}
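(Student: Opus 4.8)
The plan is to show that the defining relation $\mathbf{b}_{v,w}\sim\mathbf{b}_{v',w'}$ holds whenever $(v,w)$ and $(v',w')$ are joined by a single arrow in $\Gamma_{V\otimes W}$; since "same connected component" is generated by such single-arrow steps (going either along or against the arrow), transitivity of $\sim$ then finishes the argument. So it suffices to fix $f\in\Hom_{KQ}(V^K,W^K)$ and an arrow $(a,b)\in(\Gamma_{V\otimes W})_1$ and prove $f_{s(a),s(b)}=f_{t(a),t(b)}$. Write $\alpha=c_V(a)=c_W(b)\in Q_1$, and set $v=s(a)$, $w=s(b)$ in $V_{s(\alpha)}\setminus\{0\}$, $W_{s(\alpha)}\setminus\{0\}$, so that $V_\alpha(v)=t(a)$ and $W_\alpha(w)=t(b)$ are nonzero.

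The key step is to unwind the commuting square for $f$ at the arrow $\alpha$, namely $f_{t(\alpha)}\circ V^K_\alpha = W^K_\alpha\circ f_{s(\alpha)}$, applied to the basis vector $v$. On the one hand, $W^K_\alpha\big(f_{s(\alpha)}(v)\big)=W^K_\alpha\big(\sum_{w''} f_{v,w''}\,w''\big)=\sum_{w''} f_{v,w''}\,W_\alpha(w'')$. On the other hand, $f_{t(\alpha)}\big(V^K_\alpha(v)\big)=f_{t(\alpha)}(t(a))=\sum_{w''} f_{t(a),w''}\,w''$. Now I compare the coefficient of the basis vector $t(b)=W_\alpha(w)$ on both sides. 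On the right-hand expression it is $f_{t(a),t(b)}$. On the left-hand expression, a basis vector $w''$ of $W_{s(\alpha)}$ contributes to $W_\alpha(w'')=t(b)$ only if $W_\alpha(w'')=t(b)$; by the injectivity condition built into $\bF_1$-linear maps (equivalently, the winding condition on $c_W$), the only such $w''$ with $W_\alpha(w'')\neq 0$ equal to the fixed nonzero element $t(b)$ is $w''=w$ itself. Hence the coefficient of $t(b)$ on the left is exactly $f_{v,w}$. Therefore $f_{v,w}=f_{t(a),t(b)}$, i.e. $f_{s(a),s(b)}=f_{t(a),t(b)}$, which is what we wanted.

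Having established the single-arrow case, I conclude as follows: if $(v,w)$ and $(v',w')$ lie in the same connected component of $\Gamma_{V\otimes W}$, there is a finite sequence of vertices $(v,w)=(x_0,y_0),(x_1,y_1),\dots,(x_n,y_n)=(v',w')$ in which consecutive pairs are connected by an arrow in one direction or the other. By the previous paragraph, $f_{x_j,y_j}=f_{x_{j+1},y_{j+1}}$ for each $j$ and each $f$, so $f_{v,w}=f_{v',w'}$ for all $f\in\Hom_{KQ}(V^K,W^K)$; that is precisely $\mathbf{b}_{v,w}\sim\mathbf{b}_{v',w'}$. The only subtle point — and the one deserving care in the write-up — is the use of the winding/injectivity condition to guarantee that exactly one basis vector $w''$ maps to the fixed nonzero target $t(b)$ under $W_\alpha$; without that, several basis elements could collapse onto $t(b)$ and the coefficient comparison would not isolate $f_{v,w}$. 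Everything else is a routine bookkeeping of coordinates in the distinguished bases.
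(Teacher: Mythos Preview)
Your proof is correct and follows essentially the same approach as the paper's own proof: reduce to a single arrow $(a,b)$ in $\Gamma_{V\otimes W}$, expand both sides of the commuting square $f_{t(\alpha)}\circ V^K_\alpha = W^K_\alpha\circ f_{s(\alpha)}$ at the basis vector $v=s(a)$, and compare the coefficient of $t(b)$ using the $\bF_1$-injectivity condition to isolate $f_{v,w}$. The only difference is that you spell out the transitivity argument explicitly, whereas the paper simply says ``without loss of generality'' and treats only the adjacent case.
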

    \begin{proof}
        Without loss of generality we can assume that there is an arrow $(a,b)\in (\Gamma_{V\otimes W})_1$ with 
        \begin{align*}
            s((a,b)) &= (v,w)& t((a,b))&=(v',w').
        \end{align*}
        We set $i=c_V(v)=c_W(w)$, $j=c_V(v')=c_W(w')$ and $\alpha = c_V(a)=c_W(b)$. Let $f\in \Hom_{KQ}(V^K,W^K)$. We consider the commutative square
        \[\begin{tikzcd}
        	{V^K_i} & {W^K_i} \\
        	{V^K_j} & {W^K_j}
        	\arrow["{f_i}", from=1-1, to=1-2]
        	\arrow["{W_\alpha}", from=1-2, to=2-2]
        	\arrow["{V_\alpha}"', from=1-1, to=2-1]
        	\arrow["{f_j}"', from=2-1, to=2-2]
        \end{tikzcd}\]
        and compute
        \[W_\alpha(f_i(v)) = W_\alpha(\sum_{\overline{w}\in W_i}f_{v,\overline{w}}\cdot\overline{w})=\sum_{\overline{w}\in W_i}f_{v,\overline{w}}\cdot W_\alpha(\overline{w})\]
        \[f_j(V_\alpha(v))=\sum_{\Tilde{w}\in W_j}f_{V_\alpha(v),\Tilde{w}} \cdot \Tilde{w}=\sum_{\Tilde{w}\in W_j}f_{v',\Tilde{w}} \cdot \Tilde{w}.\]
        We want to examine the coefficient of $w'$. The injectivity condition for $\bF_1$-linear maps implies that $W_\alpha(\overline{w}) = w'$ if and only if $\overline{w} = w$. Hence the coefficient of $w'$ in  $W_\alpha(f_i(v))$ is $f_{v,w}$. The coefficient of $w'$ in $f_j(V_\alpha(v))$ is $f_{v',w'}$. The Lemma follows from commutativity of the square.
    \end{proof}
    Next we will characterise the connected components of $\Gamma_{V\otimes W}$ that correspond to non-zero equivalence classes.
    \begin{Definition}
        Let $C\subset \Gamma_{V\otimes W}$ be a connected component. We say that $C$
        \begin{enumerate}[label = (\roman*)]
            \item \textit{reflects successors} if for all vertices $(v,w)\in C_0$ and arrows $b\in (\Gamma_W)_1$ with $s(b)=w$, there is an arrow $a\in (\Gamma_V)_1$ with $s(a) = v$ and $c_V(a) = c_W(b)$. In this case the arrow $(a,b)\in C_1$ satisfies $s(a,b) = (v,w)$.
            \[\begin{tikzcd}
            	v & w \\
            	{t(a)} & {t(b).}
            	\arrow[from=1-1, to=1-2]
            	\arrow["b", from=1-2, to=2-2]
            	\arrow[dashed, from=2-1, to=2-2]
            	\arrow["a"', dashed, from=1-1, to=2-1]
            \end{tikzcd}\]
            \item \textit{induces predecessors} if for all vertices $(v',w')\in C_0$ and arrows $a\in (\Gamma_V)_1$ with $t(a)=v'$ there is an arrow $b\in (\Gamma_W)_1$ with $t(b) = w'$ and $c_V(a) = c_W(b)$. In this case the arrow $(a,b)\in C_1$ satisfies $t(a,b) = (v',w')$.
            \[\begin{tikzcd}
            	s(a) & s(b) \\
            	{v'} & {w'.}
            	\arrow[dashed, from=1-1, to=1-2]
            	\arrow["b", dashed, from=1-2, to=2-2]
            	\arrow[from=2-1, to=2-2]
            	\arrow["a"', from=1-1, to=2-1]
            \end{tikzcd}\]
            \item is an \textit{admissible component} if $C$ reflects successors and induces predecessors.
        \end{enumerate}
    \end{Definition}
    \begin{Example}
        In Example \ref{Hom:exampleone} testing each instance of the definition shows that the connected components $\{(1,5)\}$ and $\{(1,4),(2,5)\}$ are admissible, the connected component $\{(3,5)\}$ only reflects successors and the other two connected components satisfy neither condition.
    \end{Example}
    \begin{Example}\label{Hom:TrueHoms}
        Let $f\colon V\rightarrow W$ be a morphism of quiver representations over $\bF_1$ and $(F,U,\phi)\colon \Gamma_V\rightarrow \Gamma_W$ the corresponding morphism of windings. We assume that $\Image(f)$ is indecomposable. Then the set 
        \[\{(v,\phi(v))\mid v\in F_0\}\]
        is the vertex set of an admissible component which we will call $C_f$. In this case the projections yield isomorphisms $F\cong C_f\cong U$.
    \end{Example}
    \begin{proof}
        The condition on $\Image(f)$ ensures that $C_f$ is connected. To show that $C_f$ reflects successors let $(v,w)\in (C_f)_0$ be a vertex and $b\in(\Gamma_W)_1$ an arrow with $s(b) = w$. Then $b$ is in $U$ because $U$ is successor closed. Now the arrow $a = \phi^{-1}(b)$ satisfies $s(a) = \phi^{-1}(s(b)) = v$ and $c_V(a) = c_W(b)$ as desired. That $C_f$ induces predecessors follows similarly from $F$ being predecessor closed.
    \end{proof}
    An example of an admissible component that is not of this form will be given in Example \ref{cover:example}.
    \begin{Lemma}
        Let $C\subset \Gamma_{V\otimes W}$ be an admissible component. We consider the element of $\prod_{i\in Q_0}\Hom_K(V^K_i,W^K_i)$ given by 
        \[\mathbf{b}^C=\sum_{(v,w)\in C_0} \mathbf{b}_{v,w}.\]
        Then $\mathbf{b}^C\in \Hom_{KQ}(V^K,W^K)$.
    \end{Lemma}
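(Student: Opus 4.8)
The plan is to verify directly that $\mathbf{b}^C$ commutes with the arrow actions of $V^K$ and $W^K$; that it is $K$-linear in each degree is immediate from its definition as a $K$-linear combination of the maps $\mathbf{b}_{v,w}$. So I would fix an arrow $\alpha\colon i\to j$ of $Q$ and a basis vector $v\in V_i\setminus\{0_{V_i}\}$ and compare $W_\alpha(\mathbf{b}^C_i(v))$ with $\mathbf{b}^C_j(V_\alpha(v))$, it sufficing to do this on basis vectors. List all basis vectors $w_1,\dots,w_k$ of $W_i$ for which $(v,w_l)\in C_0$; here $k$ may well be larger than one, since a first coordinate can occur in several vertices of $C$. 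Then $W_\alpha(\mathbf{b}^C_i(v))=\sum_{l=1}^k W_\alpha(w_l)$, a sum of basis vectors and zeros, while $\mathbf{b}^C_j(V_\alpha(v))$ is $0$ if $V_\alpha(v)=0_{V_j}$ and otherwise equals $\sum_{w'} w'$, the sum ranging over those $w'$ with $(V_\alpha(v),w')\in C_0$.

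The heart of the argument is the case $V_\alpha(v)=v'\neq 0_{V_j}$, witnessed by the unique arrow $a\in(\Gamma_V)_1$ with $s(a)=v$ and $c_V(a)=\alpha$. I would show that $l\mapsto W_\alpha(w_l)$ restricts to a bijection from $\{\,l : W_\alpha(w_l)\neq 0_{W_j}\,\}$ onto $\{\,w' : (v',w')\in C_0\,\}$, which yields the desired equality of sums. If $W_\alpha(w_l)\neq 0$ it is the target of an arrow $b_l\in(\Gamma_W)_1$ with $s(b_l)=w_l$ and $c_W(b_l)=\alpha$, so $(a,b_l)$ is an arrow of $\Gamma_{V\otimes W}$ with source $(v,w_l)\in C_0$; being a connected component, $C$ contains every arrow incident to its vertices, so $(a,b_l)\in C_1$ and its target $(v',W_\alpha(w_l))$ lies in $C_0$. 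Conversely, given $(v',w')\in C_0$, the arrow $a$ has target $v'$, so since $C$ induces predecessors there is $b\in(\Gamma_W)_1$ with $t(b)=w'$ and $c_W(b)=\alpha$; then $(a,b)\in C_1$ has target $(v',w')$, hence source $(v,s(b))\in C_0$, so $s(b)=w_l$ for some $l$ and $W_\alpha(w_l)=w'$. Injectivity is where the winding condition enters: if $W_\alpha(w_l)=W_\alpha(w_m)=w'$ with $l\neq m$, then $w_l\neq w_m$ are distinct sources of two arrows of $\Gamma_W$ with common target $w'$ and common colour $\alpha$, contradicting that $c_W$ is a winding.

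It then remains to treat the two degenerate cases, and this is where the other halves of the hypotheses reappear. If $k\geq 1$ but $V_\alpha(v)=0_{V_j}$, I would argue that every $W_\alpha(w_l)$ vanishes: a nonzero value would come from an arrow of $\Gamma_W$ out of $w_l$ of colour $\alpha$, and since $C$ reflects successors, applied to $(v,w_l)\in C_0$, this forces an arrow of $\Gamma_V$ out of $v$ of colour $\alpha$, contradicting $V_\alpha(v)=0$. Dually, if $k=0$ but $V_\alpha(v)=v'\neq 0$, then no $w'$ satisfies $(v',w')\in C_0$: otherwise, since $C$ induces predecessors and $a$ has target $v'$, we would obtain $b\in(\Gamma_W)_1$ into $w'$ of colour $\alpha$, and the source $(v,s(b))$ of $(a,b)\in C_1$ would exhibit $v$ as a first coordinate of a vertex of $C$. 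In both degenerate cases both sides are zero, so the proof concludes. The one point that genuinely needs care is that $C$ need not be the graph of a partial bijection between $\Gamma_V$ and $\Gamma_W$ — a first coordinate can appear with multiplicity — so the correspondence on neighbours must be checked to be an honest bijection and not merely a surjection, and it is exactly the winding condition on $W$ that supplies the missing injectivity.
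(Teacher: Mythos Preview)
Your proof is correct and follows essentially the same approach as the paper: both verify commutativity on basis vectors by matching the nonzero summands in $W_\alpha(\mathbf{b}^C_i(v))$ against those in $\mathbf{b}^C_j(V_\alpha(v))$, using the two admissibility conditions together with the winding property of $c_W$ to control injectivity. The only difference is organizational---the paper shows the two index sets coincide in one stroke (using reflecting successors for one inclusion and inducing predecessors for the other), whereas you separate out the degenerate cases $V_\alpha(v)=0$ and $k=0$ explicitly; your $k=0$ case is in fact already subsumed by your bijection argument, but the redundancy does no harm.
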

    \begin{proof}
        We verify this on basis vectors. Let $v\in V_i\backslash \{0_{V_i}\}$ and let $\alpha\in Q_1$ with $s(\alpha)=i$. We once more consider the diagram
        \[\begin{tikzcd}
        	{V^K_i} & {W^K_i} \\
        	{V^K_j} & {W^K_j}
        	\arrow["{\mathbf{b}^C_i}", from=1-1, to=1-2]
        	\arrow["{W_\alpha}", from=1-2, to=2-2]
        	\arrow["{V_\alpha}"', from=1-1, to=2-1]
        	\arrow["{\mathbf{b}^C_j}"', from=2-1, to=2-2]
        \end{tikzcd}\]
        and compute
        \[W_\alpha(\mathbf{b}^C_i(v))=W_\alpha(\sum_{(v,\overline{w})\in C_0}\overline{w}) = \sum_{(v,\overline{w})\in C_0} W_\alpha(\overline{w})\]
        \[\mathbf{b}^C_j(V_\alpha(v)) = \sum_{(V_\alpha(v),\Tilde{w})\in C_0} \Tilde{w}.\]
        The former sum has no repeated non-zero summands due to $\bF_1$-linearity. Thus we only have to show that the following two sets are equal:
        \begin{align*}
            &A :=\{W_\alpha(\overline{w})\mid (v,\overline{w})\in C_0\}\backslash\{0_{W_j}\}& &\textnormal{and}& &B:=\{\Tilde{w}\mid (V_\alpha(v),\Tilde{w})\in C_0\}.
        \end{align*}
        Let $W_\alpha(\overline{w})$ be in $A$. Then there exists an arrow $b\in (\Gamma_W)_1$ with $s(b) = \overline{w}$ and $c_W(b) = \alpha$. As $C$ reflects successors there exists an arrow $a$ in $(\Gamma_V)_1$ with $s(a) = v$ and $c_V(a) = c_W(b) = \alpha$. Together they form an arrow $(a,b)\in C_1$ satisfying $t((a,b)) = (V_\alpha(v),W_\alpha(\overline{w}))\in C_0$. This proves $W_\alpha(\overline{w})\in B$ and hence $A\subset B$. Similarly $B\subset A$ is a consequence of $C$ inducing predecessors.
    \end{proof}
    \begin{Example}
        Let $f\colon V\rightarrow W$ be a morphism of quiver representations over $\bF_1$ and let $C_f$ be the connected component constructed from $f$ in Example \ref{Hom:TrueHoms}. Then $\mathbf{b}^{C_f} = f^K$.
    \end{Example}
    \begin{Lemma}\label{Hom:zero}
        Let $C\subset \Gamma_{V\otimes W}$ be a connected component that is not admissible. Then $C$ is part of the zero class.
    \end{Lemma}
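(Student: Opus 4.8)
The plan is to split into the two ways in which $C$ can fail to be admissible and, in each case, pin down a single vertex of $C$ whose coefficient must vanish for every $f\in\Hom_{KQ}(V^K,W^K)$; Lemma \ref{Hom:comprelation} then spreads this vanishing over the whole component, which is precisely the assertion that $C$ lies in the zero class.

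For the case that $C$ does not reflect successors, I would take a witnessing vertex $(v,w)\in C_0$ together with an arrow $b\in(\Gamma_W)_1$ with $s(b)=w$ admitting no partner arrow out of $v$ of the same colour $\alpha:=c_W(b)$. The point is that, by the construction of $\Gamma_V$, this forces $V_\alpha(v)=0$, while $W_\alpha(w)=t(b)\neq 0$. Running the commutative-square computation from the proof of Lemma \ref{Hom:comprelation} with an arbitrary $f$ then gives $W_\alpha(f_i(v))=f_j(V_\alpha(v))=0$; extracting the coefficient of $t(b)$ on the left, which by the injectivity condition for $\bF_1$-linear maps equals $f_{v,w}$, forces $f_{v,w}=0$.

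For the case that $C$ does not induce predecessors, I would dually take a witnessing vertex $(v',w')\in C_0$ and an arrow $a\in(\Gamma_V)_1$ with $t(a)=v'$ admitting no partner arrow into $w'$ of the same colour $\alpha:=c_V(a)$; by the construction of $\Gamma_W$ this says $w'$ is not of the form $W_\alpha(\overline w)$ for any basis vector $\overline w$ of $W_{s(\alpha)}$. With $v:=s(a)$ and an arbitrary $f$, commutativity gives $W_\alpha(f_i(v))=f_j(v')$, and comparing the coefficient of $w'$ on both sides yields $f_{v',w'}$ on the right but $0$ on the left, so $f_{v',w'}=0$.

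In both cases Lemma \ref{Hom:comprelation} identifies all the coefficients $f_{v,w}$ with $(v,w)\in C_0$, so they all vanish for every $f$, whence $C$ is part of the zero class. I do not expect any serious obstacle here: the only thing to be careful about is translating the combinatorial failure of admissibility into the precise algebraic statement about the structure maps $V_\alpha$ and $W_\alpha$ that makes the coefficient bookkeeping work, and this is immediate from the definitions of the coefficient quivers together with the $\bF_1$-injectivity property.
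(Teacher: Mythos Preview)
Your proposal is correct and follows essentially the same approach as the paper: reduce to a single vertex via Lemma~\ref{Hom:comprelation}, then use the commutative-square computation from its proof together with the $\bF_1$-injectivity condition to force the relevant coefficient to vanish. The paper only spells out the case where $C$ fails to reflect successors and remarks that the other case is similar, whereas you write out both cases explicitly; beyond that the arguments are identical.
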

    \begin{proof}
        By Lemma \ref{Hom:comprelation} it suffices to prove this for one element of $C_0$. We will only consider the case where $C_0$ does not reflect successors. A similar argument works when $C_0$ does not induce predecessors. Since $C$ does not reflect successors there is $(v,w)\in C_0$ and $b\in (\Gamma_W)_1$ such that $s(b)=w$ but there is no arrow $a\in (\Gamma_V)_1$ with $s(a) = v$ and $c_V(a) = c_W(b)$. Let $f\in \Hom_{KQ}(V^K,W^K)$. We set $\alpha = c_W(b)$, $i=s(\alpha)$ and $j=t(\alpha)$. From the proof of Lemma \ref{Hom:comprelation} we know
        \[\sum_{\overline{w}\in W_i}f_{v,\overline{w}}\cdot W_\alpha(\overline{w}) = \sum_{\Tilde{w}\in W_j}f_{V_\alpha(v),\Tilde{w}} \cdot \Tilde{w}.\]
        The right hand side vanishes because $V_\alpha(v) = 0$. Since $W_\alpha(w) = t(b) \neq 0$, we must have $f_{v,w} = 0$.        
    \end{proof}
    \begin{Corollary}\label{Hom:bijection}
        The map 
        \begin{align*}
            \phi\colon \{\textnormal{admissible components of }\Gamma_{V\otimes W}\}&\rightarrow \{\textnormal{non-zero equivalence classes in } \mathcal{B}_{V,W}\}\\
            C&\mapsto \{\mathbf{b}_{v,w} \mid (v,w)\in C_0\}
        \end{align*}
        is a bijection.
    \end{Corollary}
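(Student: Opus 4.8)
The proof is an assembly of the three preceding lemmas; the one extra ingredient I would use is the observation that the homomorphisms $\mathbf{b}^C$ behave like indicator functions on components, namely $(\mathbf{b}^C)_{v,w}=1$ if $(v,w)\in C_0$ and $(\mathbf{b}^C)_{v,w}=0$ otherwise. This is exactly what separates distinct components.

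First I would check that $\phi$ is well-defined, i.e.\ that $\phi(C)$ is genuinely a non-zero equivalence class. By Lemma \ref{Hom:comprelation} all the $\mathbf{b}_{v,w}$ with $(v,w)\in C_0$ lie in a single $\sim$-class, so $\phi(C)$ is contained in an equivalence class; and since $C$ is admissible, $\mathbf{b}^C\in\Hom_{KQ}(V^K,W^K)$ by the lemma showing $\mathbf{b}^C$ is a homomorphism, and $(\mathbf{b}^C)_{v,w}=1$ for every $(v,w)\in C_0$, so this class is not the zero class. That $\phi(C)$ is the whole class, and not a proper subset, will fall out of the surjectivity argument below.

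Next I would prove injectivity. If $C\neq C'$ are admissible components then $C_0\cap C'_0=\emptyset$, so $(\mathbf{b}^C)_{v',w'}=0$ for every $(v',w')\in C'_0$ while $(\mathbf{b}^C)_{v,w}=1$ for every $(v,w)\in C_0$; hence no element of $\phi(C)$ is $\sim$-equivalent to any element of $\phi(C')$, and $\phi(C)\neq\phi(C')$. For surjectivity, let $E$ be a non-zero equivalence class and pick $\mathbf{b}_{v,w}\in E$; since $E$ is not the zero class, $\mathbf{b}_{v,w}$ does not lie in the zero class, so by Lemma \ref{Hom:zero} the connected component $C$ of $(v,w)$ in $\Gamma_{V\otimes W}$ is admissible. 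By Lemma \ref{Hom:comprelation} we have $\phi(C)\subseteq E$. Conversely, if $\mathbf{b}_{v',w'}\in E$ with $(v',w')$ lying in a component $C'\neq C$, then $C'$ must be admissible, for otherwise $\mathbf{b}_{v',w'}$ would lie in the zero class, contradicting that $E$ is non-zero; but then $\mathbf{b}^{C'}$ has coefficient $1$ at $(v',w')$ and $0$ at $(v,w)$, contradicting $\mathbf{b}_{v,w}\sim\mathbf{b}_{v',w'}$. Therefore $E=\phi(C)$, which simultaneously gives surjectivity and the missing half of well-definedness.

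There is no serious obstacle here: the statement is a formal consequence of Lemmas \ref{Hom:comprelation} and \ref{Hom:zero} together with the construction of $\mathbf{b}^C$. The only point that deserves a moment of care is that an equivalence class cannot be strictly larger than the $\phi$-image of the component of any representative it contains, and this is handled by the separation property of the homomorphisms $\mathbf{b}^C$, exactly as in the injectivity step.
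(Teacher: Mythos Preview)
Your proof is correct and follows essentially the same approach as the paper: both use Lemma~\ref{Hom:comprelation} to show $\phi(C)$ lies in a single class, the homomorphism $\mathbf{b}^C$ to certify non-triviality and to separate distinct components, and Lemma~\ref{Hom:zero} for surjectivity. The only cosmetic difference is that the paper folds the ``$\phi(C)$ is the entire class'' argument into the well-definedness step (observing that $\mathbf{b}^C$ forces any $\sim$-equivalent element to have coefficient $1$ and hence to lie in $C_0$), whereas you defer it to the surjectivity step; the content is identical.
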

    \begin{proof}
        We check that it is well-defined first. Let $C$ be an admissible component. Then $\phi(C)$ is a subset of an equivalence class by Lemma \ref{Hom:comprelation}. The map $\mathbf{b}^C$ witnesses that this equivalence class is not the zero class and that it is contained in $\phi(C)$. This shows that $\phi$ is well-defined. It is injective by construction. Let $\mathbf{b}_{v,w}$ be part of a non-zero class. Then the connected component of $(v,w)$ must be admissible by Lemma \ref{Hom:zero}. This connected component is a preimage of the class of $\mathbf{b}_{v,w}$ under $\phi$.
    \end{proof}
    \begin{Theorem}\label{Hom:basis}
        Let $V$ and $W$ be $\bF_1$-representations of $Q$ and let $K$ be a field. Then the set 
        \[\mathscr{B}_{V,W} = \{\mathbf{b}^C \mid C \textnormal{ admissible component of } \Gamma_{V\otimes W}\}\]
        is a basis of $\Hom_{KQ}(V^K,W^K)$.
    \end{Theorem}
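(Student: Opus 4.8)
The plan is to prove the two halves — linear independence and spanning — directly, feeding on the lemmas already established in this section. The structural facts I will use are: (a) $\Hom_{KQ}(V^K,W^K)$ is a subspace of $\prod_{i\in Q_0}\Hom_K(V^K_i,W^K_i)$, and the latter has $\mathcal{B}_{V,W}$ as a basis, so a morphism is determined by its coefficient vector $(f_{v,w})$; (b) each $\mathbf{b}^C$ with $C$ admissible genuinely lies in $\Hom_{KQ}(V^K,W^K)$, which is the content of the Lemma just preceding Lemma~\ref{Hom:zero}; (c) Lemma~\ref{Hom:comprelation}, that the coefficient $f_{v,w}$ of any $f\in\Hom_{KQ}(V^K,W^K)$ depends only on the connected component of $(v,w)$ in $\Gamma_{V\otimes W}$; and (d) Lemma~\ref{Hom:zero}, that this common coefficient is $0$ on any non-admissible component.

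For linear independence, I would observe that distinct admissible components $C\neq C'$ are, in particular, distinct connected components, so $C_0\cap C'_0=\emptyset$. Since $\mathbf{b}^C=\sum_{(v,w)\in C_0}\mathbf{b}_{v,w}$ is a sum of pairwise distinct members of the basis $\mathcal{B}_{V,W}$ indexed exactly by $C_0$, a relation $\sum_C\lambda_C\mathbf{b}^C=0$ rewrites as a linear combination of pairwise distinct basis vectors in which $\mathbf{b}_{v,w}$, for $(v,w)\in C_0$, carries coefficient $\lambda_C$. Linear independence of $\mathcal{B}_{V,W}$ then forces $\lambda_C=0$ for every admissible $C$ (each such $C$ being nonempty).

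For spanning, I would take $f\in\Hom_{KQ}(V^K,W^K)$ and expand $f=\sum_{\mathbf{b}_{v,w}\in\mathcal{B}_{V,W}}f_{v,w}\mathbf{b}_{v,w}$. By (c), for each connected component $C$ of $\Gamma_{V\otimes W}$ there is a scalar $\lambda_C$ with $f_{v,w}=\lambda_C$ for all $(v,w)\in C_0$, and by (d) we have $\lambda_C=0$ whenever $C$ is not admissible. Since the connected components partition $(\Gamma_{V\otimes W})_0$, regrouping the sum yields
\[f=\sum_{C}\lambda_C\sum_{(v,w)\in C_0}\mathbf{b}_{v,w}=\sum_{C\text{ admissible}}\lambda_C\,\mathbf{b}^C,\]
so $f$ is a $K$-linear combination of elements of $\mathscr{B}_{V,W}$, and $\mathscr{B}_{V,W}$ spans $\Hom_{KQ}(V^K,W^K)$.

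I do not expect any genuine obstacle: all the substantive work lives in Lemmas~\ref{Hom:comprelation} and~\ref{Hom:zero} and in the construction of $\mathbf{b}^C$, and Corollary~\ref{Hom:bijection} already records that admissible components correspond bijectively to the non-zero equivalence classes, so this theorem is essentially a matter of assembling these pieces (one could phrase it entirely in terms of that corollary: $\mathscr{B}_{V,W}$ is obtained by summing the basis vectors over each non-zero equivalence class). The only points I would be careful about are the bookkeeping — that the expansion of $f$ in $\mathcal{B}_{V,W}$ runs over \emph{all} vertices of $\Gamma_{V\otimes W}$, hence is exhausted by summing over connected components — and the remark that a morphism is recovered from its coefficient vector, which is immediate from $\mathcal{B}_{V,W}$ being a basis of the ambient product.
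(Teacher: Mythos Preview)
Your proposal is correct and essentially identical to the paper's proof: linear independence follows because distinct admissible components have disjoint vertex sets, and spanning follows by expanding $f$ in $\mathcal{B}_{V,W}$, dropping the zero-class terms, and regrouping by component using Lemmas~\ref{Hom:comprelation} and~\ref{Hom:zero} (equivalently, Corollary~\ref{Hom:bijection}). The only cosmetic difference is that the paper invokes Corollary~\ref{Hom:bijection} directly for the regrouping, which you already note as an alternative phrasing.
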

    \begin{proof}
        The $\mathbf{b}^C$ are linearly independent because they are sums over disjoint sets of basis vectors in $\mathcal{B}_{V,W}$. Let $f\in \Hom_{KQ}(V^K,W^K)$. By definition we have
        \[f = \sum_{\mathbf{b}_{v,w}\in \mathcal{B}_{V,W}} f_{v,w}\mathbf{b}_{v,w}.\]
        We drop all summands from the zero class. Then we group the remaining summands according to their equivalence classes and apply Corollary \ref{Hom:bijection}. This yields
        \[f = \sum_C\sum_{(v,w)\in C_0} f_{v,w}\mathbf{b}_{v,w} = \sum_C f_C \mathbf{b}^C\]
        where the sums are over all admissible components and $f_C = f_{v,w}$ for any $(v,w)\in C_0$. Thus $\mathscr{B}_{V,W}$ is a generating set and hence a basis.
    \end{proof}
    \begin{Remark}
        The special case where $\Gamma_V$ and $\Gamma_W$ are trees or aperiodic cycles was studied by Crawley-Boevey \cite{CB} and Krause \cite{Krause}. The latter used so-called \textit{admissible triples} to construct a basis of $\Hom_{KQ}(V^K,W^K)$. There is a bijection
        \begin{align*}
            \{\textnormal{admissible components of }\Gamma_{V\otimes W}\}&\rightarrow\{\textnormal{admissible triples connecting }\Gamma_V \textnormal{ and }\Gamma_W\} \\
            C&\mapsto (C,\pi_1,\pi_2)
        \end{align*}
        where the maps $\pi_1$ and $\pi_2$ are given by
        \begin{align*}
            \pi_1\colon C &\rightarrow \Gamma_V& \pi_2\colon C &\rightarrow \Gamma_W\\
            (v,w)&\mapsto v& (v,w)&\mapsto w.
        \end{align*}
        In this way we recover the standard basis for morphisms between tree modules.
    \end{Remark}
\section{Covering components}
    As an application we want to study when a scalar extension $V^K$ of an indecomposable $\mathbb{F}_1$-representation $V$ remains indecomposable. We will relate this to the presence or absence of special admissible components of $\Gamma_{V\otimes V}$ which we will call covering components. They encode an unusual kind of symmetry of $V$.
    \begin{Definition}
        Let $Q$ and $Q'$ be quivers. A morphism of quivers $\pi\colon Q\rightarrow Q'$ is a \textit{covering map} if for every vertex $v\in Q'$, vertex $v'\in \pi^{-1}(v)$ and arrow $a\in Q_0$ with $s(a) = v$ (or $t(a) = v$) there is a unique arrow $a'\in \pi^{-1}(a)$ with $s(a') = v'$ (respectively $t(a') = v'$).
    \end{Definition}
    \begin{Definition}
        Let $V,W\in \Rep(Q,\bF_1)$. A connected component $C$ of $\Gamma_{V\otimes W}$ is called a \textit{covering component} if the two projections
        \begin{align*}
            \pi_1\colon C &\rightarrow \Gamma_V& \pi_2\colon C &\rightarrow \Gamma_W\\
            (v,w)&\mapsto v& (v,w)&\mapsto w
        \end{align*}
        are covering maps.
    \end{Definition}
    \begin{Example}\label{cover:automorph}
        Let $V$ be indecomposable. Then the admissible component $C_\sigma$ induced by an automorphism $\sigma\colon V\rightarrow V$ is a covering component. In this case both projections are isomorphisms.
    \end{Example}
    \begin{Example}\label{cover:example}
        We consider the following representation over $\mathbb{L}_2$:
        \[\begin{tikzcd}
        	1 & 2 & 3
         \arrow[color={rgb,255:red,92;green,92;blue,214},loop, in=160, out=200 ,distance=3em, from=1-1, to=1-1]
             \arrow[color={rgb,255:red,214;green,92;blue,92},loop, in=-20, out=20 ,distance=3em, from=1-3, to=1-3]
        	\arrow[color={rgb,255:red,214;green,92;blue,92}, curve={height=-6pt}, from=1-1, to=1-2]
        	\arrow[color={rgb,255:red,92;green,92;blue,214}, curve={height=-6pt}, from=1-2, to=1-3]
        	\arrow[color={rgb,255:red,92;green,92;blue,214}, curve={height=-6pt}, from=1-3, to=1-2]
        	\arrow[color={rgb,255:red,214;green,92;blue,92}, curve={height=-6pt}, from=1-2, to=1-1]
        \end{tikzcd}\]
        Here the colours blue and red correspond to the two arrows of $\mathbb{L}_2$. Its tensor square is given by
        \[\begin{tikzcd}
        	&&&&& {(2,1)} & {(3,1)} \\
        	{(1,1)} & {(2,2)} & {(3,3)} && {(1,2)} && {(3,2).} \\
        	&&&& {(1,3)} & {(2,3)}
             \arrow[color={rgb,255:red,92;green,92;blue,214},loop, in=160, out=200 ,distance=3em, from=2-1, to=2-1]
             \arrow[color={rgb,255:red,214;green,92;blue,92},loop, in=-20, out=20 ,distance=3em, from=2-3, to=2-3]
        	\arrow[color={rgb,255:red,214;green,92;blue,92}, curve={height=-6pt}, from=1-6, to=2-5]
        	\arrow[color={rgb,255:red,214;green,92;blue,92}, curve={height=-6pt}, from=2-5, to=1-6]
        	\arrow[color={rgb,255:red,92;green,92;blue,214}, curve={height=-6pt}, from=3-6, to=2-7]
        	\arrow[color={rgb,255:red,92;green,92;blue,214}, curve={height=-6pt}, from=2-7, to=3-6]
        	\arrow[color={rgb,255:red,214;green,92;blue,92}, curve={height=-6pt}, from=3-6, to=3-5]
        	\arrow[color={rgb,255:red,214;green,92;blue,92}, curve={height=-6pt}, from=3-5, to=3-6]
        	\arrow[color={rgb,255:red,92;green,92;blue,214}, curve={height=-6pt}, from=3-5, to=2-5]
        	\arrow[color={rgb,255:red,92;green,92;blue,214}, curve={height=-6pt}, from=2-5, to=3-5]
        	\arrow[color={rgb,255:red,92;green,92;blue,214}, curve={height=-6pt}, from=1-6, to=1-7]
        	\arrow[color={rgb,255:red,92;green,92;blue,214}, curve={height=-6pt}, from=1-7, to=1-6]
        	\arrow[color={rgb,255:red,214;green,92;blue,92}, curve={height=-6pt}, from=1-7, to=2-7]
        	\arrow[color={rgb,255:red,214;green,92;blue,92}, curve={height=-6pt}, from=2-7, to=1-7]
        	\arrow[color={rgb,255:red,214;green,92;blue,92}, curve={height=-6pt}, from=2-1, to=2-2]
        	\arrow[color={rgb,255:red,92;green,92;blue,214}, curve={height=-6pt}, from=2-2, to=2-3]
        	\arrow[color={rgb,255:red,214;green,92;blue,92}, curve={height=-6pt}, from=2-2, to=2-1]
        	\arrow[color={rgb,255:red,92;green,92;blue,214}, curve={height=-6pt}, from=2-3, to=2-2]
        \end{tikzcd}\]
        Both connected components are covering components. The left one is induced by the identity. This is the smallest representation with two covering components but no non-trivial automorphism.
    \end{Example}
    The next proposition will give alternative characterisations of covering components. Let \begin{align*}
        \tau\colon V\otimes W &\xrightarrow{\cong} W\otimes V\\
        v\otimes w &\mapsto w\otimes v
    \end{align*}
    be the standard braiding.
    \begin{Proposition}\label{cover:six}
        Let $C$ be a connected component of $\Gamma_{V\otimes W}$. Then the following are equivalent:
        \begin{enumerate}[label = (\roman*)]
            \item $C$ is a covering component.
            \item $C$ and $\tau(C)$ are admissible components.
            \item Let $(v,w)\in C_0$ and $\alpha\in Q_1$ be an arrow with $s(\alpha) = c_V(v)$. Then the following are equivalent:
            \begin{enumerate}[label = (\arabic*)]
                \item There is an arrow $a\in (\Gamma_V)_1$ with $s(a) = v$ and $c_V(a) = \alpha$.
                \item There is an arrow $(a,b)\in C_1$ with $s((a,b))=(v,w)$ and $c_{V\otimes W}((a,b)) = \alpha$.
                \item There is an arrow $b\in (\Gamma_W)_1$ with $s(b) = w$ and $c_W(b) = \alpha$.
            \end{enumerate}
            Dually let $\beta \in Q_1$ with $t(\beta) = c_V(v)$. Then the following are equivalent:
            \begin{enumerate}[label = (\arabic*')]
                \item There is an arrow $a\in (\Gamma_V)_1$ with $t(a) = v$ and $c_V(a) = \beta$.
                \item There is an arrow $(a,b)\in C_1$ with $t((a,b))=(v,w)$ and $c_{V\otimes W}((a,b)) = \beta$.
                \item There is an arrow $b\in (\Gamma_W)_1$ with $t(b) = w$ and $c_W(b) = \beta$.
            \end{enumerate}
        \end{enumerate}
    \end{Proposition}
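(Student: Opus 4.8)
The strategy is to show that (i), (ii) and (iii) are three repackagings of a single family of local biconditions. Fix a component $C$ of $\Gamma_{V\otimes W}$, a vertex $(v,w)\in C_0$, and arrows $\alpha,\beta\in Q_1$ with $s(\alpha)=c_V(v)=c_W(w)$ and $t(\beta)=c_V(v)$. First I would record two elementary facts. (a) Projecting an arrow of $C$ onto either factor shows that (2) implies (1) and (3), and dually (2$'$) implies (1$'$) and (3$'$). (b) If $v$ has an arrow $a$ of colour $\alpha$ with $s(a)=v$ and $w$ has an arrow $b$ of colour $\alpha$ with $s(b)=w$, then $(a,b)$ is an arrow of $\Gamma_{V\otimes W}$ with source $(v,w)\in C_0$; since $C$, being a connected component, contains every arrow of $\Gamma_{V\otimes W}$ incident to one of its vertices, $(a,b)\in C_1$, so (1) and (3) together imply (2), and dually (1$'$) and (3$'$) imply (2$'$). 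Combining (a) and (b), for fixed $(v,w)$ and $\alpha$ the statement (2) is equivalent to the conjunction of (1) and (3) (and (2$'$) to the conjunction of (1$'$) and (3$'$)); hence (iii) is equivalent to the assertion, call it (iii$'$), that for every $(v,w)\in C_0$, every $\alpha$ with $s(\alpha)=c_V(v)$ and every $\beta$ with $t(\beta)=c_V(v)$ one has (1)$\Leftrightarrow$(3) and (1$'$)$\Leftrightarrow$(3$'$).

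For (ii)$\Leftrightarrow$(iii) I would unwind the definition of admissibility, keeping in mind that $\tau(C)$ is the connected component of $(w,v)$ in $\Gamma_{W\otimes V}$ and that $c_V(v)=c_W(w)$. Reading off the four clauses: ``$C$ reflects successors'' says (3)$\Rightarrow$(1); ``$C$ induces predecessors'' says (1$'$)$\Rightarrow$(3$'$); ``$\tau(C)$ reflects successors'' says (1)$\Rightarrow$(3); ``$\tau(C)$ induces predecessors'' says (3$'$)$\Rightarrow$(1$'$) — each for all the relevant data. So ``$C$ and $\tau(C)$ are admissible'' is precisely (iii$'$), which by the first paragraph is (iii).

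For (i)$\Leftrightarrow$(iii) I would use the combinatorial description of covering maps of finite graphs: a quiver morphism $p\colon\Gamma'\to\Gamma$ has covering geometric realisation if and only if for every vertex $x$ of $\Gamma'$ it restricts to a bijection from the arrows of $\Gamma'$ with source $x$ onto those of $\Gamma$ with source $p(x)$, and likewise to a bijection on the arrows with target $x$ (a loop at $x$ being counted once in each of the two stars). The key point is that for $p=\pi_1$ the injectivity of these star maps is automatic: if $(a,b)$ and $(a,b')$ lie in $C_1$ with common source $(v,w)$, then $b,b'$ are arrows of $\Gamma_W$ with $s(b)=s(b')=w$ and $c_W(b)=c_V(a)=c_W(b')$, so $b=b'$ because $c_W$ is a winding; the same argument applies to in-stars and, symmetrically, to $\pi_2$. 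Thus ``$\pi_1$ is a covering'' reduces to surjectivity of every out-star map and every in-star map, i.e. to ``(1)$\Rightarrow$(2) and (1$'$)$\Rightarrow$(2$'$) for all data'', and ``$\pi_2$ is a covering'' to ``(3)$\Rightarrow$(2) and (3$'$)$\Rightarrow$(2$'$)''. Together with fact (a), which supplies the converse implications for free, this says exactly that (1), (2), (3) are equivalent and (1$'$), (2$'$), (3$'$) are equivalent, which is (iii).

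The only genuinely non-formal ingredient is the identification of the topological notion of covering map with the star-bijection condition for graphs; I would invoke this as standard, noting that $\Gamma_V$, $\Gamma_W$ and $C$ are finite. Everything else is bookkeeping with the winding property and with the fact that connected components are full subquivers closed under incident arrows. The one place to be slightly careful is that an arrow $(a,b)\in C_1$ projecting to a loop of $\Gamma_V$ need not itself be a loop, so the out-star and the in-star of a vertex must be handled separately throughout.
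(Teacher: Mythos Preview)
Your proof is correct and follows essentially the same approach as the paper: both arguments unwind the definitions of ``covering map'' and ``admissible'' into the local implications among (1), (2), (3) and their duals, matching $\pi_1$ being a covering to $(1)\Leftrightarrow(2)$ and $(1')\Leftrightarrow(2')$, $\pi_2$ being a covering to $(2)\Leftrightarrow(3)$ and $(2')\Leftrightarrow(3')$, and the four admissibility clauses for $C$ and $\tau(C)$ to the four cross-implications. The paper compresses your star-injectivity argument into the single remark that $\pi_1$ and $\pi_2$ are windings, and matches admissibility directly to implications involving (2) rather than routing through your intermediate (iii$'$), but these are cosmetic differences; your more explicit treatment of why the star maps are automatically injective and your caution about loops are points the paper leaves implicit.
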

    \begin{proof}
        We first note that by the construction of the tensor product we have
        \begin{align}\label{cover:trivialImplications}
            [(1)\text{ and }(3)] &\Leftrightarrow (2)& &\text{and}& [(1')\text{ and }(3')] &\Leftrightarrow (2').
        \end{align}
        Thus the only implications in $(iii)$ that can fail are $(1)\Rightarrow (2) \Leftarrow (3)$ and $(1')\Rightarrow (2') \Leftarrow (3')$.
        $(i)\Leftrightarrow (iii)$: Let $v\in (\Gamma_V)_0$, $(v,w)\in \pi_1^{-1}(v)$ and $a\in (\Gamma_V)_1$ with $s(a) = v$. For $\pi_1$ to be a covering map there needs to be unique arrow $(a,b)\in C_1$ with $s(a,b) = (v,w)$. Uniqueness is guaranteed by $\pi_1$ being a winding while existence is exactly the condition $(1)\Rightarrow (2)$. Repeating the argument for arrows leaving $V$ shows that $\pi_1$ is a covering map if and only if the implications $(1)\Rightarrow(2)$ and $(1')\Rightarrow(2')$ hold. Similarly $\pi_2$ is a covering map if and only if the implications $(2)\Leftarrow(3)$ and $(2')\Leftarrow(3')$ hold.\\
        $(ii)\Leftrightarrow (iii)$: The definition of '$C$ reflects successors' is exactly the implication $(3)\Rightarrow(1)$ and the definition of '$C$ induces predecessors' is $(1')\Rightarrow (3')$ is the condition that . Similarly the implications $(3')\Rightarrow (1')$ and $(1)\Rightarrow (3)$ are equivalent to $\tau(C)$ being an admissible component. Altogether $(ii)$ is equivalent to $(1)\Leftrightarrow (3)$ and $(1')\Leftrightarrow (3')$ which in view of \eqref{cover:trivialImplications} is equivalent to $(iii)$.
    \end{proof}
    We now restrict our focus to endomorphisms of an indecomposable representation. In this case there is a much simpler criterion.
    \begin{Proposition}\label{cover:criterium}
        Let $V$ be indecomposable and $C$ a connected component of $\Gamma_{V\otimes V}$. Then the following are equivalent:
        \begin{enumerate}[label = (\roman*)]
            \item $C$ is a covering component,
            \item $C$ is admissible and $\pi_1\colon C\rightarrow \Gamma_V$ is surjective on vertices.
        \end{enumerate}
    \end{Proposition}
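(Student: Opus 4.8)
I would prove the two implications separately; $(i)\Rightarrow(ii)$ is immediate, and $(ii)\Rightarrow(i)$ carries the content. For $(i)\Rightarrow(ii)$: if $C$ is a covering component then $\pi_1\colon C\to\Gamma_V$ is in particular a covering map, and a covering map onto a connected quiver (here $\Gamma_V$ is connected because $V$ is indecomposable) from a non-empty quiver is surjective; so $\pi_1$ is surjective on vertices. That $C$ is admissible is the implication $(i)\Rightarrow(ii)$ of Proposition \ref{cover:six}.

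For $(ii)\Rightarrow(i)$ assume $C$ is admissible and $\pi_1$ is surjective on vertices. For a vertex $x$ of $\Gamma_V$ write $\mathrm{out}(x)$ and $\mathrm{in}(x)$ for the sets of colours $c_V(a)$ of the arrows $a$ of $\Gamma_V$ with $s(a)=x$, respectively $t(a)=x$; by the winding condition these have cardinality the out- and in-degree of $x$. Since $V=W$, unwinding the definitions shows that $C$ being admissible means $\mathrm{out}(w)\subseteq\mathrm{out}(v)$ and $\mathrm{in}(v)\subseteq\mathrm{in}(w)$ for every $(v,w)\in C_0$, that $\tau(C)$ being admissible means the reverse inclusions hold, and — via Proposition \ref{cover:six}, $(ii)\Rightarrow(i)$ — that it suffices to prove these inclusions are equalities for every $(v,w)\in C_0$. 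Equivalently: $C$ inducing predecessors says precisely that the winding $\pi_1$ lifts every arrow of $\Gamma_V$ ending at a vertex of $C$, $C$ reflecting successors says the winding $\pi_2$ lifts every arrow starting at a vertex of $C$, and what remains is to upgrade each of these one-sided lifting properties to the two-sided one, i.e.\ to show $\pi_1$ and $\pi_2$ are covering maps.

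This reduces everything to a combinatorial statement about windings: a winding $\pi\colon C\to B$ between connected quivers that is surjective on vertices and lifts every arrow of $B$ ending at a vertex of $C$ is automatically a covering map. Granting it, $\pi_1$ is a covering (applied with $B=\Gamma_V$, since $C$ induces predecessors); hence $\mathrm{out}(v)=\mathrm{out}(w)$ for all $(v,w)\in C_0$, the transports $V_{c_V(a)}$ identify the fibres of $\pi_1$ bijectively, one checks that $\pi_2$ is surjective on vertices, so the statement applies to $\pi_2$ as well (now with the roles of source and target of arrows interchanged, as $C$ reflects successors) and $\pi_2$ is a covering too; thus $C$ is a covering component. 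For the combinatorial statement, set $d(x)=|\pi^{-1}(x)|\ge 1$: the winding condition together with the lifting of incoming arrows shows that the lifts of an arrow $a\colon x\to y$ biject with $\pi^{-1}(y)$ via their targets and inject into $\pi^{-1}(x)$ via their sources, so $d$ is weakly decreasing along arrows of $B$ and $\pi$ is a covering exactly when $d$ is constant; moreover every vertex of $C$ has the same in-degree as its image in $B$.

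The main obstacle is precisely showing that $d$ is constant. A positive, weakly decreasing function on a connected quiver need not be constant, so one must use connectedness of $C$ itself: a strict drop $d(y)<d(x)$ along an arrow would leave \emph{surplus} preimages of $x$ which — since their in-degrees in $C$ are pinned down, and the winding condition forbids two lifts of a single arrow sharing a source or a target — cannot be joined to the rest of $C$, contradicting connectedness. Concretely I would pass to the condensation of $B$ into strongly connected components (over each of which $\pi$ restricts to honest coverings, so any strict drop of $d$ must occur along a bridge) and then rule out a drop along a bridge by comparing the numbers of vertices and edges of $C$ and using $\mu(C)\ge 0$. The rest is bookkeeping.
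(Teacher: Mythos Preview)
Your reduction to the ``combinatorial statement about windings'' is where the argument breaks: that statement is false in the generality you claim. Take $B$ to be the quiver with two vertices $x,y$ and two parallel arrows $a,b\colon x\to y$ of different colours, and take $C$ to have vertices $x_1,x_2,y_1$ with arrows $\tilde a\colon x_1\to y_1$ over $a$ and $\tilde b\colon x_2\to y_1$ over $b$. Then $\pi\colon C\to B$ is a winding, $C$ and $B$ are connected, $\pi$ is surjective on vertices, and every incoming arrow lifts (at $y_1$ both $a$ and $b$ lift; $x$ has no incoming arrows). But $\pi$ is not a covering: the arrow $b$ does not lift at $x_1$. In your notation $d(x)=2>1=d(y)$, yet $C$ is connected, so the ``surplus preimage'' heuristic and the Betti-number bookkeeping cannot close the gap; indeed here $\mu(C)=0<1=\mu(B)$. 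The point is that in-lifting for $\pi_1$ alone, even with surjectivity and connectedness of $C$, does not force $\pi_1$ to be a covering.

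What you are discarding is the second half of admissibility (reflecting successors), which concerns $\pi_2$, and in the paper's proof the two projections are used \emph{together} rather than one at a time. The paper exploits surjectivity of $\pi_1$ to build, via pigeonhole, a cyclic chain $(v_0,v_1,\dots,v_n)$ with $v_0=v_n$ and $(v_{i-1},v_i)\in C_0$; then ``reflects successors'' (an out-lifting property for $\pi_2$) is applied repeatedly around this cycle to transport an outgoing arrow at $v$ to an outgoing arrow at $w$ whenever $(v,w)\in C_0$, which is exactly the missing implication $(1)\Rightarrow(2)$. The dual argument handles $(3')\Rightarrow(2')$. So the crux is this interplay: $\pi_1$-surjectivity supplies the cycle, and the $\pi_2$-side of admissibility propagates arrows around it. Your decoupling into two separate one-sided lifting problems loses precisely this interaction.
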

    \begin{proof}
        $(i)\Rightarrow(ii)$: The coefficient quiver $\Gamma_V$ is connected since $V$ is indecomposable. Any non-empty covering of a connected quiver  is surjective.\\
        $(ii)\Rightarrow(i)$: By Proposition \ref{cover:six} we need to show that $\tau(C)$ is an admissible component. This is equivalent to the implications $(2')\Leftarrow (3')$ and $(1)\Rightarrow (2)$ from the same proposition. We will only prove $(1)\Rightarrow (2)$. The other implication follows from a similar argument.
        \begin{Claim}
            Let $(v,w)\in C_0$ and $a\in (\Gamma_V)_1$ with $s(a) = v$. Assume that there is a sequence $(v_0,\dots,v_n)$ of vertices of $\Gamma_V$ with $v_0=v_n=v$, $v_1=w$ and $(v_{i-1},v_i)\in C_0$ for $1\leq i\leq n$. Then there is an arrow $b\in(\Gamma_V)_1$ with $s(b) = w$ and $c_V(a)=c_v(b)$.
        \end{Claim}
        \begin{proof}
            We consider the vertex $(v_{n-1},v_n)$. The arrow $a$ starts at $v_n=v$ and $C$ reflects successors, hence there must be an arrow $a_{n-1}\in(\Gamma_V)_1$ with $s(a_{n-1}) = v_{n-1}$ and $c_V(a_{n-1}) = c_V(a)$. By descending induction there is an arrow $a_1\in(\Gamma_V)_1$ with $s(a_1)=v$ and $c_V(a_1) = c_V(a)$. The desired arrow is given by $b:=a_1$ since $v_1= w$.
            \[\begin{tikzcd}
            	v & w & {v_2} & {...} & {v_{n-1}} & v \\
            	{t(a)} & {t(b)} & {t(a_2)} & {...} & {t(a_{n-1})} & {t(a)}
            	\arrow[from=1-1, to=1-2]
            	\arrow[from=1-2, to=1-3]
            	\arrow[from=1-3, to=1-4]
            	\arrow[from=1-4, to=1-5]
            	\arrow[from=1-5, to=1-6]
            	\arrow["a"', from=1-1, to=2-1]
            	\arrow["b"', dashed, from=1-2, to=2-2]
            	\arrow["a"', from=1-6, to=2-6]
            	\arrow[dashed, from=2-5, to=2-6]
            	\arrow["{a_{n-1}}"', dashed, from=1-5, to=2-5]
            	\arrow[dashed, from=2-4, to=2-5]
            	\arrow[dashed, from=2-3, to=2-4]
            	\arrow["{a_2}"', dashed, from=1-3, to=2-3]
            	\arrow[dashed, from=2-1, to=2-2]
            	\arrow[dashed, from=2-2, to=2-3]
            \end{tikzcd}\]
        \end{proof}
        If we can find such a sequence for each vertex of $C$ then this proves $(1)\Rightarrow (2)$. We start by finding just one vertex that admits such a sequence. Let $v_1\in(\Gamma_V)_0$ be any vertex. Because $\pi_1$ is surjective on vertices there must be another vertex $v_2\in(\Gamma_V)_0$ with $(v_1,v_2)\in C_0$. Using induction we obtain an infinite sequence $(v_i)_{i\in \bN}$ of vertices of $\Gamma_V$ with $(v_i,v_{i+1})\in C_0$ for all $i\in \bN$. By the pigeonhole principle there must be some $i<j\in \bN$ with $v_i = v_j$. The subsequence $(v_i,v_{i+1},\dots,v_j)$ is of the desired form for the vertex $(v_i,v_{i+1})$.
        
        Now let $(v,w)$ and $(v',w')$ be adjacent vertices in $C$ such that $(v,w)$ admits such a sequence. If we can show that $(v',w')$ also admits such a sequence then every vertex admits one and we are done. Let $(a,b)\in(\Gamma_V)_1$ be the arrow connecting $(v,w)$ and $(v',w')$. We assume $s((a,b)) = (v,w)$ and $t((a,b))=(v',w')$. A similar argument works should $(a,b)$ have the opposite orientation. Now let $(v_0,\dots,v_n)$ be a sequence for $(v,w)$. Let $\alpha = c_V(a)$. Because $C$ reflects successors and $v=v_n$ there is an arrow $a_{n-1}\in (\Gamma_V)_1$ with $s(a_{n-1})=v_{n-1}$ and $c_V(a_{n-1}) = \alpha$. By induction there is an arrow $a_i$ with $s(a_i)=v_i$ for all $0\leq i\leq n$. The winding condition gives $a_0=a_n=a$ and $a_1=b$. We obtain a sequence $((a_0,a_1),(a_1,a_2),\dots,(a_{n-1},a_n))$ of arrows in $C_1$. Now the sequence $(t(a_0),t(a_1)\dots,t(a_n))$ satisfies $t(a_0) =t(a)=v'$, $t(a_1)=t(b)=w'$, $t(a_n) = t(a)=v'$ and $(t(a_{i-1}),t(a_i)) = t((a_{i-1},a_i))\in C_0$ for all $1\leq i \leq n$. Hence it is the desired sequence for $(v',w')$ completing the proof.
    \end{proof}
    Finally we give a second criterion for detecting covering components.
    \begin{Proposition}\label{cover:detection}
        Let $V$ be indecomposable and $(C^1,\dots,C^n)$ a sequence of admissible components of $\Gamma_{V\otimes V}$. Further assume that there is a sequence $(v_0,\dots,v_n)$ of vertices of $\Gamma_V$ with $v_0 = v_n$ and $(v_{i-1},v_i)\in C_0^i$ for $1\leq i \leq n$. Then $C^1$ is a covering component. 
    \end{Proposition}
    \begin{proof}
        We consider the endomorphism $\mathbf{b}^{C^n}\cdot\dotso\cdot\mathbf{b}^{C^1}$ of $V^\bC$ and its basis expansion
        \[\mathbf{b}^{C^n}\cdot\dotso\cdot\mathbf{b}^{C^1} = \sum_{\mathbf{b}^C\in \mathscr{B}_{V,V}} \lambda_C \mathbf{b}^C.\]
        Its value at $v_0$ is given by
        \begin{equation}
            \mathbf{b}^{C^n}\cdot\dotso\cdot\mathbf{b}^{C^1}(v_0) = \sum_{(v_{n-1},v_n)\in C^n_0}\dots\sum_{(v_1,v_2)\in C^2_0}\sum_{(v_0,v_1)\in C^1_0} v_n.
        \end{equation}
        the sequence $(v_0,\dots,v_n)$ witnesses that the coefficient of $v_n$ in $\mathbf{b}^{C^n}\cdot\dotso\cdot\mathbf{b}^{C^1}(v_0)$ is not zero. Because $v_0 = v_n$ we get $\lambda_{C_\identity}\neq0$. In particular none of the standard basis vectors of $V^\bC$ lie in the kernel of $\mathbf{b}^{C^n}\cdot\dotso\cdot\mathbf{b}^{C^1}(v_i)$. Then none of them lie in the kernel of $\mathbf{b}^{C^1}$ either. Finally this implies that $\pi_1\colon C^1\rightarrow V$ is surjective and hence that $C^{i+1}$ is a covering component by Proposition \ref{cover:criterium}.
    \end{proof}
    
\section{Absolutely indecomposable representations}
    \begin{Definition}
        An indecomposable $\bF_1$-representation $V$ is called \textit{absolutely indecomposable} if $V^K$ is indecomposable for all fields $K$.
    \end{Definition}
    In this section we will apply our previous considerations to obtain the following combinatorial characterisation of absolutely indecomposable representations.
    \begin{Theorem}\label{abs:theorem}
        Let $V$ be an indecomposable quiver representation over $\bF_1$. Then the following are equivalent:
        \begin{enumerate}[label = (\roman*)]
            \item $V$ is absolutely indecomposable,
            \item $V^\mathbb{C}$ is indecomposable,
            \item $\Gamma_{V\otimes V}$ contains exactly one covering component.
        \end{enumerate}
    \end{Theorem}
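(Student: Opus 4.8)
The plan is to show $(i)\Rightarrow(ii)\Rightarrow(iii)\Rightarrow(i)$; the first implication is immediate. Throughout, let $C_{\mathrm{id}}\subseteq\Gamma_{V\otimes V}$ be the component induced by $\mathrm{id}_V$: by Example~\ref{cover:automorph} it is a covering component, its vertex set is the diagonal $\{(v,v):v\in(\Gamma_V)_0\}$, and $\mathbf{b}^{C_{\mathrm{id}}}=\mathrm{id}_{V^K}$ for every field $K$. Recall that by Theorem~\ref{Hom:basis} the set $\{\mathbf{b}^C:C\ \text{admissible}\}$ is a basis of $\End_{KQ}(V^K)$.

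For $(ii)\Rightarrow(iii)$ I argue contrapositively. Since $C_{\mathrm{id}}$ is a covering component there is at least one, so suppose $C\neq C_{\mathrm{id}}$ is a second one. By Proposition~\ref{cover:six} the component $C$ is admissible, so $\mathbf{b}^C\in\End_{\mathbb{C}Q}(V^{\mathbb{C}})$. Let $G$ be the digraph on $(\Gamma_V)_0$ with an edge $v\to w$ for each $(v,w)\in C_0$; then $\mathbf{b}^C$ is a nonnegative integer matrix with $\operatorname{tr}\bigl((\mathbf{b}^C)^{k}\bigr)$ equal to the number of closed walks of length $k$ in $G$, and in particular $\operatorname{tr}(\mathbf{b}^C)=|C_0\cap(C_{\mathrm{id}})_0|=0$ since $C\neq C_{\mathrm{id}}$ are disjoint components. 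As $V$ is indecomposable, $\Gamma_V$ is connected, so the covering map $\pi_1$ is surjective, every vertex of $G$ has out-degree at least one, and $G$ contains an oriented cycle; hence $\operatorname{tr}\bigl((\mathbf{b}^C)^{\ell}\bigr)\geq 1$ for its length $\ell$. Therefore not all eigenvalues of $\mathbf{b}^C$ vanish, which together with $\operatorname{tr}(\mathbf{b}^C)=0$ forces at least two distinct eigenvalues; so the minimal polynomial of $\mathbf{b}^C$ has at least two distinct roots, and $\mathbb{C}[\mathbf{b}^C]\subseteq\End_{\mathbb{C}Q}(V^{\mathbb{C}})$ contains a nontrivial idempotent. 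Thus $V^{\mathbb{C}}$ decomposes, contradicting $(ii)$.

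For $(iii)\Rightarrow(i)$, fix an arbitrary field $K$ and let $\mathfrak{m}\subseteq\End_{KQ}(V^K)$ be the span of the $\mathbf{b}^C$ with $C$ admissible and $C\neq C_{\mathrm{id}}$; by $(iii)$ these $C$ are exactly the admissible \emph{non-covering} components, and $\End_{KQ}(V^K)=K\cdot\mathrm{id}\oplus\mathfrak{m}$. Step~1 is that $\mathfrak{m}$ is a two-sided ideal: for such a $C$ and any admissible $D$, evaluating at a diagonal vertex $(v,v)$ shows that the $\mathbf{b}^{C_{\mathrm{id}}}$-coefficient of $\mathbf{b}^C\circ\mathbf{b}^D$ equals $\#\{w:(v,w)\in D_0\cap\tau(C)_0\}$, where $\tau$ is the braiding; since $C$ is admissible but not covering, Proposition~\ref{cover:six} gives that $\tau(C)$ is not admissible, so $\tau(C)\neq D$, and as distinct components they are disjoint, whence this coefficient vanishes, and likewise for $\mathbf{b}^D\circ\mathbf{b}^C$. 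Step~2, the heart of the argument, is the purely combinatorial claim that there is no sequence of vertices $y_0,y_1,\dots,y_m$ of $\Gamma_V$ with $m\geq 1$, $y_0=y_m$, and $(y_{i-1},y_i)\in D^{(i)}_0$ for admissible non-covering components $D^{(i)}$. To prove it, work over $K=\mathbb{Q}$: such a sequence would place $\mathbf{b}^{D^{(m)}}\circ\dots\circ\mathbf{b}^{D^{(1)}}$ into the ideal $\mathfrak{m}$ and hence force its $\mathbf{b}^{C_{\mathrm{id}}}$-coefficient to be $0$, yet that coefficient is the $(y_0,y_0)$-entry of this composition, which counts closed sequences of precisely that shape and so is a positive integer --- a contradiction. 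The point worth emphasising is that the ideal property of Step~1 holds over \emph{every} field, so it may be invoked over $\mathbb{Q}$ to obtain genuine integrality.

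Finally, put $N=|(\Gamma_V)_0|$. Applying any $N$-fold product $\mathbf{b}^{C^{(1)}}\circ\dots\circ\mathbf{b}^{C^{(N)}}$ of basis elements of $\mathfrak{m}$ to a basis vector $v=z_0$ produces a sum over sequences $z_0,z_1,\dots,z_N$ with $(z_{j-1},z_j)\in C^{(N+1-j)}_0$ for each $j$; among these $N+1$ vertices two coincide, yielding a closed sub-sequence forbidden by Step~2, so no such sequences exist and the product is $0$. Hence $\mathfrak{m}^N=0$, so $\mathfrak{m}$ is a nilpotent two-sided ideal with $\End_{KQ}(V^K)/\mathfrak{m}\cong K$; a finite-dimensional algebra with an ideal of this kind is local, so $V^K$ is indecomposable. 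As $K$ was arbitrary, $V$ is absolutely indecomposable. The main obstacle is Step~2, and specifically the realisation that it should be proved in characteristic $0$ even though the final statement is characteristic-free; granting that, the pigeonhole bound and the passage to a local ring are routine.
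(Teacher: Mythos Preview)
Your proof is correct and follows the same cyclic architecture $(i)\Rightarrow(ii)\Rightarrow(iii)\Rightarrow(i)$ as the paper, including the passage to characteristic zero for the counting step, but the two nontrivial implications are handled with different tools.

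For $(ii)\Rightarrow(iii)$ the paper builds the same digraph $G$ and then appeals to the Perron--Frobenius theorem to produce two distinct eigenvalues of $\mathbf{b}^C$. Your trace argument ($\operatorname{tr}(\mathbf{b}^C)=0$ because $C$ and $C_{\identity}$ are disjoint, while $\operatorname{tr}\bigl((\mathbf{b}^C)^\ell\bigr)>0$ because every vertex of $G$ has positive out-degree) reaches the same conclusion more cheaply and without external input.

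For $(iii)\Rightarrow(i)$ the paper does not isolate the ideal property of $\mathfrak{m}$. It argues directly that every element of the complementary subspace is nilpotent: after the same pigeonhole step it passes to $\bC$, observes that the identity coefficient of the relevant subproduct is nonzero, deduces that no basis vector lies in the kernel of the first factor $\mathbf{b}^{C^{i+1}}$, and then invokes Proposition~\ref{cover:criterium} (admissible plus $\pi_1$ surjective implies covering) for the contradiction. Your route instead uses Proposition~\ref{cover:six}: since a non-covering admissible $C$ has $\tau(C)$ non-admissible, $\tau(C)$ is disjoint from every admissible component, which kills the diagonal entries of any product with a factor from $\mathfrak{m}$. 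This yields the stronger structural statement that $\mathfrak{m}$ is a nilpotent two-sided ideal, so $\End_{KQ}(V^K)$ is visibly local; the paper obtains only that each element of the complement is nilpotent and then quotes a separate Fitting-type criterion.
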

    \begin{proof}
        $(i)\Rightarrow(ii)$: This is clear.\\
        $(ii)\Rightarrow(iii)$: This will be Proposition \ref{abs:dec}.\\
        $(iii)\Rightarrow(i)$: This will be Proposition \ref{abs:indec}.
    \end{proof}
    The unique covering component in $(iii)$ is $C_\identity$. We start by showing that any representation with more than one covering component becomes decomposable over $\bC$. To this end we use the following criterion for decomposable modules.
    \begin{Proposition}
        Let $K$ be a field, $A$ a $K$-algebra and $V$ a finite dimensional $A$-module. If there is an endomorphism $f$ of $V$ that has at least two different eigenvalues then $V$ is decomposable.
    \end{Proposition}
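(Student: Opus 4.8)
The plan is to invoke Fitting's lemma for the $A$-module endomorphism $g := f - \lambda\cdot\identity_V$, where $\lambda\in K$ is one of the eigenvalues of $f$; this amounts to splitting off the generalised eigenspace of $f$ for the eigenvalue $\lambda$.

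First I would observe that, since $f$ is an $A$-module endomorphism, it commutes with the action of every element of $A$; hence so does $g$, and so does every power $g^n$. Consequently $\ker(g^n)$ and $\Image(g^n)$ are $A$-submodules of $V$ for all $n$. Taking $n := \dim_K V$, Fitting's lemma produces an internal direct sum decomposition $V = \ker(g^n)\oplus\Image(g^n)$ of $K$-vector spaces, and by the preceding observation this is in fact a decomposition of $A$-modules.

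It then remains to check that neither summand is zero. Since $\lambda$ is an eigenvalue of $f$ we have $\ker(g)\neq 0$, so $\ker(g^n)\neq 0$. For the other summand, suppose towards a contradiction that $\Image(g^n)=0$; then $g$ is nilpotent. Let $\mu\neq\lambda$ be a second eigenvalue of $f$, which exists by hypothesis. Then
\[
f - \mu\cdot\identity_V \;=\; (\lambda-\mu)\cdot\identity_V + g
\]
is the sum of the invertible operator $(\lambda-\mu)\cdot\identity_V$ and a nilpotent operator commuting with it, hence is itself invertible --- contradicting that $\mu$ is an eigenvalue of $f$. Therefore $\Image(g^n)\neq 0$, and $V = \ker(g^n)\oplus\Image(g^n)$ is a nontrivial decomposition of $A$-modules, so $V$ is decomposable.

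There is no real obstacle here; the only point deserving attention is that Fitting's decomposition, which a priori is merely a splitting of $K$-vector spaces, respects the $A$-module structure, and this is immediate once one notes that $g$ --- and hence all of its powers, kernels and images --- is compatible with the $A$-action.
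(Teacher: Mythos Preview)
Your proof is correct and follows essentially the same approach as the paper: both apply Fitting's lemma to the endomorphism $g = f - \lambda\cdot\identity_V$. The paper phrases it contrapositively---every endomorphism of an indecomposable module is nilpotent or invertible, and $g$ is neither---whereas you construct the Fitting decomposition $V = \ker(g^n)\oplus\Image(g^n)$ explicitly and verify both summands are nonzero; your argument that $g$ cannot be nilpotent (else $f-\mu\cdot\identity_V$ would be invertible) is precisely the detail the paper leaves implicit.
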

    \begin{proof}
        Let $f$ be such an endomorphism and $\lambda \neq \mu$ two of its eigenvalues. By the fitting lemma every endomorphism of an indecomposable module is either nilpotent or an isomorphism. The endomorphism $f-\lambda \identity_V$ is neither.
    \end{proof}
    We also need the following version of the Perron-Frobenius theorem.
    \begin{Theorem}[Perron-Frobenius]
        Let $M$ be the adjacency matrix of a connected, regular directed graph $G$. Then the characteristic polynomial of $M$ has a simple root.
    \end{Theorem}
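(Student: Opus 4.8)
The plan is to exhibit a specific simple root: writing $d$ for the common in-degree and out-degree of the vertices of the $d$-regular graph $G$, I will show that $d$ is a simple root of the characteristic polynomial of $M$. If $d=0$ then $G$ is a single vertex, $M=(0)$, and the characteristic polynomial $x$ has the simple root $0$, so assume $d\ge 1$. Write $n\ge 1$ for the number of vertices and $\mathbf{1}\in\bC^{n}$ for the all-ones vector. Regularity gives $M\mathbf{1}=d\mathbf{1}$ and $\mathbf{1}^{\top}M=d\mathbf{1}^{\top}$, so $d$ is an eigenvalue of $M$, and it remains only to show that its algebraic multiplicity equals $1$.

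First I would promote the hypothesis ``connected'' to ``strongly connected''. Consider the condensation of $G$ into its strongly connected components; this is a finite acyclic directed graph, so it has a sink, i.e.\ there is a strongly connected component $S$ of $G$ with no edges leaving it. Counting the edges whose tail lies in $S$ gives $d\cdot|S|$ edges, all internal to $S$ since $S$ is a sink; counting the edges whose head lies in $S$ gives $d\cdot|S|$ edges, namely those internal to $S$ together with those entering $S$ from outside. Comparing the two counts shows that no edge enters $S$ from outside either, so $S$ is a union of connected components of the underlying undirected graph; since $G$ is connected, $S$ is all of $G$. Hence $M$ is irreducible, and one may now either invoke the classical Perron--Frobenius theorem for irreducible nonnegative matrices or argue directly as in the next paragraph.

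For the geometric multiplicity: if $Mx=dx$ with $0\ne x\in\bC^{n}$, pick an index $j$ with $|x_{j}|$ maximal; then $d|x_{j}|=|\sum_{k}M_{jk}x_{k}|\le\sum_{k}M_{jk}|x_{k}|\le d|x_{j}|$, forcing equality in both inequalities. Equality in the second inequality gives $|x_{k}|=|x_{j}|$ for every out-neighbour $k$ of $j$, and then equality in the first forces $x_{k}=x_{j}$ for those $k$; propagating along directed paths and using strong connectivity, $x\in\bC\mathbf{1}$, so the geometric multiplicity of $d$ is $1$. For the algebraic multiplicity: if it were at least $2$, then since the geometric multiplicity is $1$ the eigenvalue $d$ would be defective, so there would be $y\in\bC^{n}$ with $(M-dI)y$ a nonzero element of $\ker(M-dI)$; after rescaling, $(M-dI)y=\mathbf{1}$. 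Multiplying on the left by $\mathbf{1}^{\top}$ and using $\mathbf{1}^{\top}M=d\mathbf{1}^{\top}$ yields $d\,\mathbf{1}^{\top}y=d\,\mathbf{1}^{\top}y+n$, contradicting $n\ge 1$. Hence $d$ is a simple root of the characteristic polynomial. The one step that is not a routine formality is the weak-to-strong connectivity passage via the edge count; the rest is the standard argument that the Perron root of an irreducible nonnegative matrix is simple.
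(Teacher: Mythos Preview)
Your proof is correct. The paper does not actually prove this theorem; it simply refers the reader to a graph-theoretic source \cite{AGT}, remarking that the version there is stated for \emph{strongly} connected graphs. Your argument therefore supplies what the paper omits. In particular, your edge-counting step showing that a weakly connected regular directed graph is automatically strongly connected is exactly what is needed to pass from the paper's hypothesis to the hypothesis of the cited reference, and the paper gives no indication of how that reduction goes. The remainder of your argument---the maximum-modulus argument for geometric multiplicity $1$ and the left-eigenvector trick ruling out a nontrivial Jordan block at $d$---is the standard direct route to simplicity of the Perron root and avoids appealing to the full Perron--Frobenius machinery. So your approach is more self-contained than the paper's treatment, at the modest cost of a few lines of linear algebra.
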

    See \cite{AGT} for a graph theoretic treatment of the Perron-Frobenius theorem. The theorem is stated there using the weaker assumption that the graph is strongly connected.
    \begin{Proposition}\label{abs:dec}
        Let $V\in \Rep(Q,\bF_1)$ be indecomposable and let $C \neq C_\identity$ be a covering component in $\Gamma_{V\otimes V}$. Then $V^\bC$ is decomposable.
    \end{Proposition}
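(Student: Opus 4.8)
The plan is to exhibit an endomorphism of $V^{\bC}$ having at least two distinct eigenvalues and then to apply the decomposability criterion stated just above. The natural candidate is $\mathbf{b}^C$ itself: a covering component is in particular admissible by Proposition~\ref{cover:six}, so $\mathbf{b}^C$ lies in $\End_{\bC Q}(V^{\bC})$. First I would write down its matrix in the distinguished bases. For each $i\in Q_0$ the map $\mathbf{b}^C_i\colon V_i^{\bC}\to V_i^{\bC}$ is, relative to the basis $V_i\setminus\{0_{V_i}\}$, the $0/1$-matrix $M^{(i)}$ with $(M^{(i)})_{w,v}=1$ exactly when $(v,w)\in C_0$. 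Since $V$ is indecomposable, $\Gamma_V$ is connected and nonempty, so both covering maps $\pi_1,\pi_2\colon C\to\Gamma_V$ have the same (finite) number of sheets $d\ge 1$; hence every row and every column of every $M^{(i)}$ sums to $d$. In particular $M^{(i)}\mathbf{1}=d\,\mathbf{1}$ for the all-ones vector, and since some $V_i\ne 0$, the number $d$ lies in the spectrum of $\mathbf{b}^C$ viewed as a linear operator on $\bigoplus_i V_i^{\bC}$ (equivalently, as an element of $\End_{\bC Q}(V^{\bC})$).

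The core is then a short trace computation. Suppose, for contradiction, that $\mathbf{b}^C$ has only one eigenvalue. Since $d$ is in its spectrum, that eigenvalue is $d$, so $M^{(i)}=d\,\identity+N_i$ with $N_i$ nilpotent for every $i$, whence $\operatorname{tr}(M^{(i)})=d\cdot\dim_{\bC}V_i^{\bC}$. On the other hand $\operatorname{tr}(M^{(i)})=\#\{\,u\in V_i\setminus\{0_{V_i}\}\mid (u,u)\in C_0\,\}$ is a sum of $0$'s and $1$'s, so $\operatorname{tr}(M^{(i)})\le\dim_{\bC}V_i^{\bC}$. Choosing an $i$ with $V_i\ne 0$ forces $d=1$; but then $\operatorname{tr}(M^{(i)})=\dim_{\bC}V_i^{\bC}$ for \emph{every} $i$, so every diagonal entry of every $M^{(i)}$ equals $1$, i.e. $(u,u)\in C_0$ for every basis vector $u$. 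Thus $C_0$ contains the whole diagonal of $\Gamma_{V\otimes V}$, which is exactly the (connected) vertex set of $C_{\identity}$ (Examples~\ref{Hom:TrueHoms} and~\ref{cover:automorph}). Since $C$ is a connected component meeting $C_{\identity}$, this gives $C=C_{\identity}$, contradicting the hypothesis. Hence $\mathbf{b}^C$ has at least two distinct eigenvalues, and therefore $V^{\bC}$ is decomposable.

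No single step is a genuine obstacle here: the key moves are to take $\mathbf{b}^C$ itself as the test endomorphism and to notice that the hypothetical single eigenvalue is pinned down to the number of sheets $d$ by the all-ones vector, after which comparing traces closes the argument. The only things needing a little care are transcribing the matrix $M^{(i)}$ from the definition of $\Gamma_{V\otimes V}$ and recalling that the diagonal component is $C_{\identity}$. One can also phrase the final contradiction through the Perron--Frobenius theorem quoted above, applied to the $d$-regular directed graphs underlying the $M^{(i)}$ when $d\ge 2$ (a $0/1$ matrix with all row sums $d\ge 2$ is not $d\,\identity$, so each connected component of that graph has at least two vertices and hence, by Perron--Frobenius, at least two distinct eigenvalues), and, when $d=1$, to the permutation operator $\mathbf{b}^{C_\sigma}$ of the nontrivial $\bF_1$-automorphism $\sigma$ of $V$ one then obtains, which has both $1$ and a nontrivial root of unity among its eigenvalues; but the trace computation has the advantage of avoiding this case split.
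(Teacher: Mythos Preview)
Your proof is correct and shares its skeleton with the paper's: both take $\mathbf{b}^C$ as the test endomorphism, read off its matrix as the adjacency matrix of a $d$-regular directed graph on $(\Gamma_V)_0$ (the paper works with the whole graph $G$, you with its colour blocks $M^{(i)}$, which is the same thing since every vertex $(v,w)$ of $\Gamma_{V\otimes V}$ has $c_V(v)=c_V(w)$), and then argue that this matrix cannot have a single eigenvalue. The difference is in that last step. The paper invokes the Perron--Frobenius theorem: each connected component of $G$ is $d$-regular and has at least two vertices (any edge $(v,w)$ with $v=w$ would put $C$ in the same component as $C_{\identity}$), so its characteristic polynomial has a simple root and hence at least two eigenvalues. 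You instead pin the hypothetical unique eigenvalue to $d$ via the all-ones vector, compare $\operatorname{tr}(M^{(i)})=d\dim V_i^{\bC}$ against the trivial bound $\operatorname{tr}(M^{(i)})\le\dim V_i^{\bC}$, force $d=1$, and then read off that the full diagonal lies in $C$, giving $C=C_{\identity}$. Your route is genuinely more elementary---it avoids Perron--Frobenius entirely and needs nothing beyond Jordan form over $\bC$---while the paper's route makes the graph-theoretic picture more explicit. Either way the argument goes through; your final paragraph already notes that the Perron--Frobenius variant is available, and that is exactly what the paper does.
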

    \begin{proof}
        We will show that the endomorphism $\mathbf{b}^C$ has multiple eigenvalues by interpreting it as the adjacency matrix of a graph and then applying the Perron-Frobenius theorem. Let $G = (G_0,G_1,s,t)$ be the directed graph given by
        \begin{align*}
            G_0 &= (\Gamma_V)_0\\
            G_1 &= C_0\\
            s((v,w)) &= v\\
            t((v,w)) &= w.
        \end{align*}
        The adjacency matrix of $G$ is equal to the matrix representation of $\mathbf{b}^C$ with respect to the standard basis of $V^\bC$. Let $v\in (\Gamma_V)_0$ be a vertex. A vertex $(v',w')$ of $C_0$ becomes an edge starting at $v$ if and only if $\pi_1((v',w')) = v$. Thus the outdegree of $v$ is the degree of the covering $\pi_1$. This proves that $G$ is $k$-regular for $k = \degree(\pi_1)$. The graph $G$ might not be connected. If it is not then it is a disjoint union of $k$-regular connected graphs and its adjacency matrix decomposes as a block diagonal matrix where each block is the adjacency matrix of a connected component. Let $D$ be such a connected component and let $(v,w)$ be an edge in $D$. Such an edge must exist because $D$ is $k$-regular. If $v=w$ then $C$ intersects $C_{\identity}$. So we must have $v\neq w$ and consequently $D$ consists of multiple vertices. Applying the Perron-Frobenius theorem to the adjacency matrix of $D$ shows that it has multiple different eigenvalues. Then $\mathbf{b}^C$ must have multiple different eigenvalues, too. In particular $V^\bC$ is decomposable.
    \end{proof}
    This completes one direction of our characterisation of absolutely indecomposable representations. For the other direction we need to prove that scalar extensions of representations with just one covering component are indecomposable. We will use the following criterion for indecomposable representations.
    \begin{Proposition}
        Let $K$ be a field, $A$ a $K$-algebra and $V$ a finite dimensional $A$-module. Let $I\subset \End_A(V)$ be a subspace consisting entirely of nilpotent endomorphisms such that $K\cdot\identity_V \oplus I = \End_A(V)$. Then $V$ is indecomposable.
    \end{Proposition}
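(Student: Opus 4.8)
The plan is to deduce indecomposability of $V$ from the fact that $\End_A(V)$ is a \emph{local} ring. Recall that in a local ring the only idempotents are $0$ and $1$: if $e^2=e$, then by locality one of $e$ and $\identity_V-e$ is invertible, and multiplying the relation $e^2=e$ (respectively $(\identity_V-e)^2=\identity_V-e$) by that inverse forces $e=\identity_V$ (respectively $e=0$). On the other hand, a proper direct sum decomposition $V=V_1\oplus V_2$ with $V_1,V_2\neq 0$ would give the projection onto $V_1$ as an idempotent endomorphism different from $0$ and $\identity_V$. So it suffices to show that $\End_A(V)$ is local.

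To do this I would verify the standard criterion that a ring $R$ is local if and only if, for every $x\in R$, at least one of $x$ and $1-x$ is invertible. Take $x\in\End_A(V)$ and use the hypothesis $K\cdot\identity_V\oplus I=\End_A(V)$ to write $x=\lambda\,\identity_V+n$ uniquely with $\lambda\in K$ and $n\in I$. The one ingredient needed is that a nilpotent endomorphism $m$ of $V$ has $c\,\identity_V+m$ invertible for every nonzero $c\in K$: indeed $c\,\identity_V+m=c(\identity_V+c^{-1}m)$ and $\identity_V+c^{-1}m$ is inverted by the finite sum $\sum_{k\geq 0}(-c^{-1}m)^k$. Now if $\lambda\neq 0$, then $n\in I$ is nilpotent by hypothesis, so $x=\lambda\,\identity_V+n$ is invertible. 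If $\lambda=0$, then $x=n\in I$, and $-n\in I$ is again nilpotent (here we only use that $I$ is a $K$-subspace, hence closed under scalars), so $\identity_V-x=\identity_V-n$ is invertible. Either way the criterion holds and $\End_A(V)$ is local.

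I expect no real obstacle here; the only point worth flagging is that $I$ is assumed merely to be a subspace of nilpotent elements, not a priori a two-sided ideal, yet this is enough, since the argument only ever rescales an element of $I$ by a scalar and uses that elements of $K^\times\cdot\identity_V+I$ are units while elements of $I$ are not. In fact the proof shows a posteriori that $I$ is precisely the set of non-units of $\End_A(V)$, and hence is automatically its unique maximal ideal.
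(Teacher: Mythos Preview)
Your argument is correct. Both your proof and the paper's use the decomposition $x=\lambda\,\identity_V+n$ with $n\in I$ nilpotent, but proceed differently from there. The paper observes directly that such an $x$ has $\lambda$ as its only eigenvalue, whereas the projection onto a proper nonzero summand would have both $0$ and $1$ as eigenvalues, giving an immediate contradiction. Your route instead packages the same information into the statement that $\End_A(V)$ is local, via the criterion that $x$ or $\identity_V-x$ is always a unit, and then invokes the absence of nontrivial idempotents in a local ring. The paper's argument is a line shorter and avoids naming the local-ring machinery; your version is more structural and, as you note, identifies $I$ a posteriori as the Jacobson radical, which is a pleasant byproduct.
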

    \begin{proof}
        Let $f = \lambda\identity + f_{nil}\in \End_A(V)$ with $f_{nil}\in I$. Then the only eigenvalue of $f$ is $\lambda$. Let $V=V_1 \oplus V_2$ be a non-trivial direct sum decomposition. The projection onto $V_1$ has eigenvalues 0 and 1, a contradiction.
    \end{proof}
    \begin{Proposition}\label{abs:indec}
        Let $V\in \Rep(Q,\bF_1)$ be an indecomposable representation with just one covering component. Let $K$ be a field. Then $V^K$ is indecomposable.
    \end{Proposition}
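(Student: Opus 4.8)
\section*{Proof proposal}

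The plan is to apply the indecomposability criterion stated just above --- the one asking for a decomposition $K\cdot\identity_V\oplus I=\End_A(V)$ into the scalars and a subspace $I$ of nilpotent endomorphisms --- with $A=KQ$ and the module $V^K$. By Theorem \ref{Hom:basis} the set $\mathscr{B}_{V,V}=\{\mathbf{b}^C\mid C\text{ admissible component of }\Gamma_{V\otimes V}\}$ is a basis of $\End_{KQ}(V^K)$; the component $C_\identity$ is admissible because $V$ is indecomposable, and $\mathbf{b}^{C_\identity}=\identity_{V^K}$. Putting $I:=\Kspan\{\mathbf{b}^C\mid C\neq C_\identity\text{ admissible}\}$ we get $K\cdot\identity_{V^K}\oplus I=\End_{KQ}(V^K)$ immediately, so the whole problem reduces to showing that every element of $I$ is nilpotent. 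I will prove the stronger statement that $I$ is a nilpotent subspace: writing $N=|(\Gamma_V)_0|$, every $N$-fold composite $\mathbf{b}^{C_1}\circ\cdots\circ\mathbf{b}^{C_N}$ with each $C_i\neq C_\identity$ admissible vanishes, and hence $f^N=0$ for all $f\in I$.

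To handle such composites, recall that $\mathbf{b}^C$ acts on the distinguished basis of $V^K$ by $v\mapsto\sum_{(v,w)\in C_0}w$. Let $\widehat{G}$ be the directed graph with vertex set $(\Gamma_V)_0$ having an edge $v\to w$ whenever $(v,w)$ lies in some admissible component of $\Gamma_{V\otimes V}$ other than $C_\identity$. Then the coefficient of $u$ in $(\mathbf{b}^{C_1}\circ\cdots\circ\mathbf{b}^{C_N})(v)$ equals the number of directed walks of length $N$ from $v$ to $u$ in $\widehat{G}$. Since a directed acyclic graph on $N$ vertices has no walk of length $N$, it is enough to show that $\widehat{G}$ is acyclic.

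This is the heart of the proof. Assume for contradiction that $\widehat{G}$ contains a directed cycle $v_0\to v_1\to\cdots\to v_{k-1}\to v_0$, and for each $i$ (mod $k$) let $C^{(i)}$ be the admissible component ($\neq C_\identity$) containing $(v_i,v_{i+1})$. Because $C^{(i)}$ reflects successors and induces predecessors, chaining the resulting inclusions of colour sets around the cycle shows that all the $v_i$ have the same set of colours of outgoing arrows, and likewise the same set of colours of incoming arrows. Consequently, for any colour $\gamma$ of an arrow out of $v_0$: it is a colour out of every $v_i$, and the lifting clause of ``reflects successors'' applied at each $(v_i,v_{i+1})$ shows $(V_\gamma(v_i),V_\gamma(v_{i+1}))\in C^{(i)}_0$, i.e.\ $V_\gamma(v_0)\to\cdots\to V_\gamma(v_{k-1})\to V_\gamma(v_0)$ is again a cycle running through the \emph{same} components $C^{(0)},\dots,C^{(k-1)}$; dually ``induces predecessors'' lets one transport the cycle backwards along any incoming colour. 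Now let $S\subseteq(\Gamma_V)_0$ be the set of vertices that start a cycle through this fixed sequence of components. Then $S\ni v_0$ is non-empty, and the two transport properties (together with the fact that, along any single such cycle, the outgoing and incoming colour sets are constant) show that $S$ is closed under following an arrow of $\Gamma_V$ either forwards or backwards. Since $V$ is indecomposable, $\Gamma_V$ is connected, so $S=(\Gamma_V)_0$. In particular $\pi_1\colon C^{(0)}\to\Gamma_V$ is surjective on vertices, so by Proposition \ref{cover:criterium} $C^{(0)}$ is a covering component distinct from $C_\identity$, contradicting the hypothesis that $V$ has just one covering component.

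Granting that $\widehat{G}$ is acyclic, the rest is formal: $I^N=0$, every element of $I$ is nilpotent, and the quoted criterion gives that $V^K$ is indecomposable. I expect the delicate point to be the closure argument for $S$ --- specifically checking that transporting a cycle by $V_\gamma$ (or along an incoming arrow) yields a cycle through the same components $C^{(i)}$, not merely through some admissible components, and that the colour $\gamma$ is indeed available at every vertex of the cycle, which is precisely where one uses that the reflects-successors and induces-predecessors conditions hold at every edge $(v_i,v_{i+1})$ of the cycle.
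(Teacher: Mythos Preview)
Your proposal is correct and follows the paper's overall strategy: set $I=\Kspan(\mathscr{B}_{V,V}\setminus\{\mathbf{b}^{C_\identity}\})$, apply the indecomposability criterion, and show that a closed walk through non-identity admissible components forces some $C^{(i)}$ to be a covering component via Proposition~\ref{cover:criterium}. The difference lies in how that last implication is proved.

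The paper extracts the covering component by passing to $\bC$: once a cycle $v_i=v_j$ is found, it expands $\mathbf{b}^{C^j}\cdots\mathbf{b}^{C^{i+1}}$ in the basis $\mathscr{B}_{V,V}$ and observes that the $(v,v)$-matrix entry equals $\lambda_{C_\identity}$ for \emph{every} $v$; since this entry is a positive integer for $v=v_i$, it is nonzero for all $v$, whence $\pi_1$ is surjective on $C^{i+1}$. Your route is purely combinatorial: you transport the closed walk along arrows of $\Gamma_V$ using ``reflects successors'' and ``induces predecessors'' to show that the set $S$ of starting vertices of such walks is closed under adjacency, hence equals $(\Gamma_V)_0$. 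Your argument avoids the detour through $\bC$ and in fact yields the sharper statement $I^N=0$ (so $I$ is a nilpotent two-sided ideal), whereas the paper only argues that each element of $I$ is nilpotent. The paper's trick, on the other hand, is shorter once one has internalised the basis $\mathscr{B}_{V,V}$.

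One small inaccuracy: the coefficient of $u$ in $(\mathbf{b}^{C_1}\circ\cdots\circ\mathbf{b}^{C_N})(v)$ is not the number of \emph{all} length-$N$ walks in $\widehat G$, but only of those using the prescribed component $C_{N-i+1}$ at step $i$ (and in positive characteristic even that count is taken in $K$). This does not affect your argument, since acyclicity of $\widehat G$ forces the set of length-$N$ walks to be empty outright.
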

    \begin{proof}
        We will apply the previous proposition to the subspace 
        \[I =\Kspan(\mathscr{B}_{V,V}\backslash \{\mathbf{b}^{C_\identity}\}).\]
        The claim $K\cdot\identity_{V^K} \oplus I = \End_{KQ}(V^K)$ follows from $\mathbf{b}^{C_\identity} = \identity_{V^K} $ and Theorem \ref{Hom:basis}. So we only need to check that $I$ consists of nilpotent morphisms. Let $f\in I$ be non-nilpotent and let $n>|(\Gamma_V)_0|$. The endomorphism $f^n$ is a linear combination of elements of the form
        \[\mathbf{b}^{C^n}\cdot\dotso\cdot\mathbf{b}^{C^1}.\]
        for some sequence $(C^i)$ of admissible components in $\Gamma_{V\otimes V}$ none of which are covering components. We choose a non-zero element of this form. Let $v\in V$ be non-zero. Its image is given by
        \begin{equation}
            \mathbf{b}^{C^n}\cdot\dotso\cdot\mathbf{b}^{C^1}(v) = \sum_{(v_{n-1},v_n)\in C^n_0}\dots\sum_{(v_1,v_2)\in C^2_0}\sum_{(v,v_1)\in C^1_0} v_n.
        \end{equation}
        Because $\mathbf{b}^{C^n}\cdot\dotso\cdot\mathbf{b}^{C^1}$ is not zero there must be a sequence $(v_0,\dots,v_n)$ with $v_i\in V$ not zero and $(v_{i-1},v_i)\in C^i_0$ for $1\leq i \leq n$. By the pigeonhole principle there are $0\leq i<j\leq n$ with $v_i = v_j$. Applying Proposition \ref{cover:detection} to the sequences $(C^{i+1},\dots C^j)$ and $(v_i,\dots v_j)$ shows that $C^{i+1}$ is a covering component, a contradiction.
    \end{proof}
    This criterion yields a practically fast algorithm.
    \begin{Corollary}
        Let $V\in \Rep(Q,\bF_1)$ be indecomposable. Whether $V$ is absolutely indecomposable can be tested in $O(m^2)$ time where $m$ is the number of arrows in $\Gamma_V$.
    \end{Corollary}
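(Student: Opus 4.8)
The plan is to make Theorem~\ref{abs:theorem} algorithmic. By that theorem, $V$ is absolutely indecomposable if and only if $\Gamma_{V\otimes V}$ has exactly one covering component (there is always at least the component $C_\identity$ coming from the identity automorphism, by Example~\ref{cover:automorph}), so it suffices to enumerate the connected components of $\Gamma_{V\otimes V}$, decide for each whether it is a covering component, and check that the number of covering ones equals one.

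Concretely I would proceed as follows. Write $n=|(\Gamma_V)_0|$. The representation $V$ is presented by its coefficient quiver $(\Gamma_V,c_V)$, of size $O(n+m)$. From it one builds $\Gamma_{V\otimes V}$ explicitly: its vertex set is $\{(v,w)\mid c_V(v)=c_V(w)\}$ and its arrow set is $\{(a,b)\mid c_V(a)=c_V(b)\}$, so grouping the vertices and the arrows of $\Gamma_V$ by colour and forming all pairs inside each colour class produces $\Gamma_{V\otimes V}$ in $O(n^2+m^2)$ time; note it has at most $n^2$ vertices and at most $m^2$ arrows, since $\sum_i n_i^2\le(\sum_i n_i)^2=n^2$ for the sizes $n_i$ of the vertex colour classes, and likewise for arrows. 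A standard traversal then labels the connected components in $O(n^2+m^2)$. For a fixed component $C$ I invoke Proposition~\ref{cover:criterium}: $C$ is a covering component iff $C$ is admissible and $\pi_1\colon C\to\Gamma_V$ is surjective on vertices. The surjectivity check merely records which vertices of $\Gamma_V$ occur as first coordinates of vertices of $C$, which is linear in $|C_0|$ and hence $O(n^2)$ over all components. Admissibility is local: $C$ reflects successors iff for every $(v,w)\in C_0$ and every arrow of $\Gamma_V$ leaving $w$ there is an arrow of the same colour leaving $v$, and dually $C$ induces predecessors iff for every $(v,w)\in C_0$ and every arrow of $\Gamma_V$ entering $v$ there is an arrow of the same colour entering $w$. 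Since $c_V$ is a winding, each vertex of $\Gamma_V$ has at most one incoming and one outgoing arrow of each colour, so after precomputing for each vertex the two colour-indexed lookup tables in $O(m)$ time, each such check is $O(1)$; testing a single vertex $(v,w)$ then costs time proportional to the sum of the degrees of $v$ and $w$ in $\Gamma_V$, and summing over all vertices of $\Gamma_{V\otimes V}$ gives $O(nm)$. Altogether the algorithm runs in $O(n^2+m^2+nm)$ time, and $V$ is reported absolutely indecomposable precisely when exactly one component passed both tests.

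To finish it suffices to observe that $O(n^2+m^2+nm)=O(m^2)$: as $V$ is indecomposable, $\Gamma_V$ is connected, so $m\ge n-1$; the degenerate case $m=0$ forces $\Gamma_V$ to be a single vertex and $V$ to be absolutely indecomposable with nothing to compute, so we may assume $m\ge1$, whence $n\le m+1\le 2m$ and all three terms are $O(m^2)$. I do not expect a genuine obstacle here: the mathematical content is carried entirely by Theorem~\ref{abs:theorem} and Proposition~\ref{cover:criterium}, which are already available, and the rest is bookkeeping. The only point that really needs to be checked — and the reason the exponent stays at $2$ — is that admissibility of a component can be verified one vertex at a time, in constant time per incident arrow, rather than by any global procedure. (A practically faster variant avoids materialising $\Gamma_{V\otimes V}$ altogether and instead grows each candidate covering component starting from $(v_0,w)$ for a fixed base vertex $v_0$, aborting the growth the instant the local admissibility test fails and using a global visited-marker so that each vertex of $\Gamma_{V\otimes V}$ is explored at most once; but the crude bound above already yields the claimed $O(m^2)$.)
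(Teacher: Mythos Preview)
Your proposal is correct and follows essentially the same approach as the paper: build $\Gamma_{V\otimes V}$, decompose into connected components, and test locally which ones are covering, then invoke Theorem~\ref{abs:theorem}. The one minor difference is the characterisation of covering components you use: the paper checks, via Proposition~\ref{cover:six}(ii), that both $C$ and $\tau(C)$ are admissible (four local conditions, giving the paper's count of $nm$ test cases for each), whereas you invoke Proposition~\ref{cover:criterium} and test admissibility of $C$ together with surjectivity of $\pi_1$; both routes cost $O(nm)$ and the overall bound is the same.
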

    \begin{proof}
        Let $n$ be the number of vertices in $\Gamma_V$. The quiver $\Gamma_V$ is connected because $V$ is indecomposable. In particular $n\leq m+1$. The tensor product $\Gamma_{V\otimes V}$ can be computed in $O(m^2)$ time. We can identify all covering components by finding all admissible components $C$ such that $\tau(C)$ is admissible. Across all connected components there are $nm$ test cases each for reflecting successors, inducing predecessors and their dual properties. Each of those cases can be checked in constant time.
    \end{proof}
    
\section{Representations with finite nice length}
    Cerulli-Irelli \cite{CI} and Haupt \cite{Haupt}  developed a combinatorial technique to compute the Euler characteristic of certain quiver Grassmannians. Jun and Sistko \cite{JS2} adapted these results to the context of $\bF_1$-representations. They introduced the notion of finite nice length as a criterion for when this approach can be applied. We will show that all indecomposable representations with finite nice length are absolutely indecomposable.\\
    Finite nice length representations are defined via a sequence of increasingly fine gradings satisfying certain compatibility conditions.
    \begin{Definition}
        Let $V\in \Rep(Q,\bF_1)$ be a representation. A \textit{grading} of $V$ is a map $\partial\colon (\Gamma_V)_0\rightarrow \bZ$. Let $\underline{\partial}=(\partial_0,\dots,\partial_n)$ be a sequence of gradings for $V$. A grading $\partial_{n+1}$ is $\underline{\partial}$-\textit{nice} if for each pair of arrows $a,b\in (\Gamma_V)_1$ satisfying
        \begin{enumerate}[label = (\roman*)]
            \item $c_V(a) = c_V(b)$,
            \item $\partial_i(s(a)) = \partial_i(s(b))$ for all $0\leq i \leq n$,
            \item $\partial_i(t(a)) = \partial_i(t(b))$ for all $0\leq i \leq n$,
        \end{enumerate}
        we have 
        \[\partial_{n+1}(s(a))-\partial_{n+1}(t(a))=\partial_{n+1}(s(b))-\partial_{n+1}(t(b)).\]
        The sequence $\underline{\partial}$ is \textit{nice} if $\partial_i$ is $(\partial_0,\dots,\partial_{i-1})$-nice for all $0\leq i \leq n$. The representation $V$ has \textit{finite nice length} if there exists a nice sequence $\underline{\partial}=(\partial_0,\dots,\partial_n)$ for $V$ that distinguishes vertices. That is for each pair of vertices $v\neq w\in (\Gamma_V)_0$ there exists $0\leq i \leq n$ with $\partial_i(v)\neq \partial_i(w)$. In this case the \textit{nice length} of $V$ is the smallest $n$ such that there exists a nice sequence $(\partial_0,\dots,\partial_n)$ for $V$ that distinguishes vertices.
    \end{Definition}
    We will prove that indecomposable finite nice length representations are absolutely indecomposable by studying certain strings in the coefficient quiver.
    \begin{Definition}
        Let $\Gamma$ be a quiver. We consider an alphabet consisting of the two symbols $a$ and $a^-$ for each arrow $a\in \Gamma_1$. We define $s(a^-):=t(a)$ and $t(a^-):= s(a)$. A \textit{string} in $\Gamma$ is a non-empty word $a_n\dots a_1$ over this alphabet such that $s(a_{i+1}) = t(a_i)$ for all $1\leq i\leq n-1$. We set $s(a_n\dots a_1) := s(a_1)$ and $t(a_n\dots a_1) := t(a_n)$. If $C = a_n\dots a_1$ and $D = b_m\dots b_1$ are two strings with $t(D)=s(C)$ then we define their \textit{composition} by $CD=a_n\dots a_1b_m\dots b_1$. A string $C$ is called a \textit{cycle} if $s(C) = t(C)$. In this case we denote the $k$-fold composition of $C$ with itself by $C^k$. Let $c\colon \Gamma\rightarrow Q$ be a winding. We extend $c$ to a map on strings by setting $c(a^-) = c(a)^-$ and $c(a_n\dots a_1) = c(a_n)\dots c(a_1)$.
    \end{Definition}
    We will describe a certain type of string within a coefficient quiver that prevents finite nice length. Then we will show that such a string always exists for not absolutely indecomposable representations. 
    \begin{Proposition}\label{finite:criterion}
        Let $V\in \Rep(Q,\bF_1)$ be a representation. If there is a string $E$ in $\Gamma_V$ that is not a cycle but can be completed to a cycle by a string $D$ in such a way that $c(D) = c(E)^s$ for some $s\geq 1$, then $V$ does not have finite nice length.
    \end{Proposition}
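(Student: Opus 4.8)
The plan is to argue by contradiction. Suppose that $V$ has finite nice length, witnessed by a nice sequence $\underline{\partial}=(\partial_0,\dots,\partial_n)$ of gradings that distinguishes the vertices of $\Gamma_V$. Write $p:=s(E)$ and $q:=t(E)$. Since $E$ is not a cycle we have $p\neq q$, and since the symbol $c(E)^s$ is only defined when $c(E)$ is a cycle, $c(E)$ is a closed walk in $Q$ based at $c_V(p)=c_V(q)$. The contradiction I aim for is that $\partial_i(p)=\partial_i(q)$ for every $i$.

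First I would record the combinatorics of lifting walks. The winding condition (equivalently, the injectivity built into $\bF_1$-linear maps) ensures that for any vertex $v\in(\Gamma_V)_0$ and any walk $\omega$ in $Q$ with source $c_V(v)$ there is at most one string $C$ in $\Gamma_V$ with $s(C)=v$ and $c(C)=\omega$; I call $C$ the lift of $\omega$ at $v$ and write $f(v):=t(C)$ for the particular walk $\omega=c(E)$, when the lift exists. The string $E$ is a lift of $c(E)$ at $p$, so $f(p)=q$. Cutting $D$ into its $s$ successive blocks of the same length as $c(E)$ exhibits each block as a lift of $c(E)$; hence, setting $q^{(0)}:=q$ and $q^{(m+1)}:=f(q^{(m)})$, we get $q^{(s)}=p$, so that $q^{(0)},\dots,q^{(s)}=p$ is a cyclic $f$-orbit (indeed $f(p)=q=q^{(0)}$).

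Next I would prove the transport lemma that does the real work: for each $k\le n$, the increment of $\partial_k$ along a letter $\ell$ of a string (an arrow or its formal inverse), namely $\partial_k(t(\ell))-\partial_k(s(\ell))$, depends only on the colour $c_V(\ell)$ and on the tuple $(\partial_0(s(\ell)),\dots,\partial_{k-1}(s(\ell)))$. This is an induction on $k$: the case $k=0$ is the base case of niceness (increments are colour-determined), and in the inductive step one first uses the cases below $k$ to see that $(\partial_0(t(\ell)),\dots,\partial_{k-1}(t(\ell)))$ is itself a function of the colour and the source tuple, so that for two letters with the same colour and the same source tuple the hypotheses in the definition of $\partial_k$ being $(\partial_0,\dots,\partial_{k-1})$-nice are automatically met; niceness then yields equal $\partial_k$-increments. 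The consequence I need is: if $v$ and $v'$ agree in all of $\partial_0,\dots,\partial_{k-1}$ and $f(v),f(v')$ are both defined, then the lifts of $c(E)$ at $v$ and $v'$ pass through vertices with equal $(\partial_0,\dots,\partial_{k-1})$-values at every step, and therefore $\partial_k(f(v))-\partial_k(v)=\partial_k(f(v'))-\partial_k(v')$.

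Finally I would close by induction on $k$, showing simultaneously that $\partial_k(p)=\partial_k(q)$ and $\partial_k(q^{(m)})=\partial_k(p)$ for all $m$. Assume this for all indices below $k$; then $p$ and all the $q^{(m)}$ agree in $\partial_0,\dots,\partial_{k-1}$, so by the consequence above the lift of $c(E)$ at each of them has one common $\partial_k$-increment $D_k$. Reading $D_k$ off from $E$ gives $D_k=\partial_k(q)-\partial_k(p)$; reading it off from the $s$ blocks of $D$ and telescoping along $q^{(0)},\dots,q^{(s)}$ gives $\partial_k(p)-\partial_k(q)=sD_k$. Adding, $(s+1)D_k=0$ in $\bZ$, and since $s\ge 1$ this forces $D_k=0$, whence $\partial_k(p)=\partial_k(q)$ and $\partial_k(q^{(m)})=\partial_k(p)$ for every $m$. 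Taking $k=n$ we obtain $\partial_i(p)=\partial_i(q)$ for all $i\le n$, contradicting that $\underline{\partial}$ distinguishes the distinct vertices $p$ and $q$; hence $V$ has no finite nice length. The main obstacle is the transport lemma: the nice condition only constrains displacements of pairs of arrows that already agree, in every lower grading, at both endpoints, so one must bootstrap through the whole sequence $\partial_0,\dots,\partial_{k-1}$ to promote ``same source data'' into ``same data all along a lifted walk''; everything else is the bookkeeping that turns $f(p)=q$ together with $f^{s}(q)=p$ into the relation $(s+1)D_k=0$.
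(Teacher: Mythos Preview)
Your argument is correct and follows essentially the same route as the paper's proof: both set up an induction on the index $k$ of the nice sequence, use the niceness condition for $\partial_k$ together with the inductive hypothesis to see that the $\partial_k$-increments along corresponding letters of $E$ and of the $s$ blocks of $D$ coincide, and then telescope around the closed walk $DE$ to obtain $(s+1)D_k=0$ in $\bZ$, hence $D_k=0$. The only cosmetic difference is that the paper carries the stronger inductive statement $\partial_i(s(a_k))=\partial_i(s(a_{k+m}))$ for \emph{all} vertices along $DE$, whereas you track only the checkpoint vertices $p,q^{(0)},\dots,q^{(s)}$ and offload the letter-by-letter propagation into your transport lemma; the content is the same.
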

    \begin{proof}
        We expand $E = a_m\dots a_1$, $D = a_{m(s+1)}\dots a_{m+1}$. We interpret all indices on these arrows modulo $m(s+1)$. Let $\underline{\partial}=(\partial_0,\dots,\partial_n)$ be a nice sequence for $V$. We will prove by induction that $\partial_i(s(a_k)) = \partial_i(s(a_{k+m}))$ for $0\leq i \leq n$ and $1\leq k \leq m(s+1)$. Then $\underline{\partial}$ does not distinguish vertices since $s(a_1) = s(E)\neq t(E) = s(a_{m+1})$. We assume this holds for all $i<r$. For $r = 0$ this is an empty condition. Our assumptions are chosen such that for any $1\leq k \leq m(s+1)$ the pair of arrows $a_k$ and $a_{k+m}$ satisfies conditions $(i)$-$(iii)$ in the the niceness condition for $\partial_r$. For $1\leq k \leq m(s+1)$ this implies 
        \[\partial_r(s(a_k))-\partial_r(s(a_{k+1}))=\partial_r(s(a_{k+m}))-\partial_r(s(a_{k+m+1})).\]
        A telescope sum along the cycle $DE$ yields
        \[\partial_r(s(a_k))-\partial_r(s(a_{k+m}))=\partial_r(s(a_{k+m}))-\partial_r(s(a_{k+2m})).\]
        We call this difference $\Delta_k$. A second telescope sum shows
        \[0 =\partial_r(s(a_k))-\partial_r(s(a_{k+m(s+1)}))=(s+1)\Delta_k.\]
        In particular, $\Delta_k = 0$ for all $k$ as required. This completes the induction.
    \end{proof}
    \begin{Proposition}
        Let $V\in \Rep(Q,\bF_1)$ be indecomposable and of finite nice length. Then $V$ is absolutely indecomposable.
    \end{Proposition}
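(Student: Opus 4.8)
The plan is to prove the contrapositive, using Theorem \ref{abs:theorem}. If the indecomposable representation $V$ is \emph{not} absolutely indecomposable, then by Theorem \ref{abs:theorem} its tensor square $\Gamma_{V\otimes V}$ contains a covering component $C\neq C_\identity$, and I will extract from $C$ a string in $\Gamma_V$ of exactly the kind forbidden by Proposition \ref{finite:criterion}. That proposition then shows $V$ cannot have finite nice length, which is what we want.

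So the core of the argument is the construction of the forbidden string. Fix any vertex $(v_0,v_1)\in C_0$, and, using that $\Gamma_V$ is connected, fix a string $E_0$ in $\Gamma_V$ from $v_0$ to $v_1$. I then build inductively a sequence of vertices $(v_i)_{i\ge 0}$ of $\Gamma_V$ together with strings $E_i$ from $v_i$ to $v_{i+1}$ such that $(v_i,v_{i+1})\in C_0$ and $c(E_i)=c(E_0)$ for all $i$. Given $(v_i,v_{i+1})\in C_0$ and $E_i$, I lift $E_i$ along the covering map $\pi_1:C\to\Gamma_V$ starting at the vertex $(v_i,v_{i+1})$, which lies over $v_i=s(E_i)$; path lifting produces a string $\widetilde{E_i}$ inside the component $C$ whose terminal vertex lies over $v_{i+1}=t(E_i)$, and I call that vertex $(v_{i+1},v_{i+2})$. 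Setting $E_{i+1}:=\pi_2(\widetilde{E_i})$ gives a string from $v_{i+1}$ to $v_{i+2}$, and since $\pi_1$ and $\pi_2$ preserve colours we get $c(E_{i+1})=c(\widetilde{E_i})=c(E_i)=c(E_0)$.

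Now I apply the pigeonhole principle to the sequence $(v_i)$ in the finite set $(\Gamma_V)_0$: choose $a<b$ with $v_a=v_b$ and $b-a$ minimal, so that $v_a,\dots,v_{b-1}$ are pairwise distinct. If $b-a=1$ then $(v_a,v_a)\in C_0$, so the components $C$ and $C_\identity$ share the vertex $(v_a,v_a)$ and hence coincide, contradicting $C\neq C_\identity$; therefore $b-a\ge 2$ and in particular $v_a\neq v_{a+1}$. Set $E:=E_a$, a string from $v_a$ to $v_{a+1}$ that is not a cycle, and $D:=E_{b-1}E_{b-2}\cdots E_{a+1}$, a string from $v_{a+1}$ back to $v_a$ with $c(D)=c(E_0)^{\,b-a-1}=c(E)^{\,b-a-1}$. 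Then $DE$ is a cycle completing $E$ and $c(D)=c(E)^{s}$ with $s=b-a-1\ge 1$, so Proposition \ref{finite:criterion} applies and $V$ has no finite nice length, completing the contrapositive.

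The step I expect to be the only real obstacle is precisely this passage from the abstract covering symmetry of $C$ to a \emph{concrete} closed string in $\Gamma_V$ whose colour is a proper power of the colour of a non-closed sub-string: the device is to use path lifting for $\pi_1$ to ``transport'' the fixed string $E_0$ around $C$ while recording its $\pi_2$-shadows $E_i$ in $\Gamma_V$, after which finiteness of $(\Gamma_V)_0$ forces the transported vertices to return, and the hypothesis $C\neq C_\identity$ forces the resulting loop to be non-trivial. As a sanity check, in the simplest case where $\pi_1$ is an isomorphism one has $C=C_\sigma$ for a non-trivial automorphism $\sigma$ of $V$, and the construction specializes to taking $E$ a string from some vertex $v$ with $\sigma v\neq v$ to $\sigma v$, and $D=(\sigma^{k-1}E)\cdots(\sigma E)$ where $k\ge 2$ is the size of the $\sigma$-orbit of $v$, so that $c(D)=c(E)^{k-1}$.
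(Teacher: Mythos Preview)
Your proof is correct and follows the same strategy as the paper's: argue the contrapositive, take a covering component $C\neq C_{\identity}$ furnished by Theorem~\ref{abs:theorem}, and use path lifting along $\pi_1$ together with projection by $\pi_2$ to manufacture a string forbidden by Proposition~\ref{finite:criterion}. The only difference is organisational: the paper splits into the cases $\deg(\pi_1)=1$ (where $C=C_\sigma$ for a nontrivial automorphism) and $\deg(\pi_1)\ge 2$, whereas your iterated-lifting construction handles both uniformly---your closing sanity check is precisely the paper's first case.
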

    \begin{proof}
        We prove the contraposition. Let $V$ be an indecomposable but not absolutely indecomposable representation. By \ref{abs:theorem} there is a covering component $C\neq C_\identity$ in $\Gamma_{V\otimes V}$. We have to find a string as in the previous proposition. There are two cases to consider depending on the degree of the covering $\pi_1\colon C \rightarrow \Gamma_V$.
        \begin{Claim}
            If $\degree(\pi_1) = 1$ then $V$ does not have finite nice length.
        \end{Claim}
        \begin{proof}
            In this case the projections $\pi_1$ and $\pi_2$ are isomorphisms. Thus $C$ must be of the form $C_\sigma$ for the non-identity automorphism $\sigma =\pi_2\circ(\pi_1)^{-1}\colon V\rightarrow V$. Let $v\in(\Gamma_V)_0$ be a vertex that is not fixed by $\sigma$. Because $\Gamma_V$ is connected there is an undirected path from $v$ to $\sigma(v)$ and hence a string $E$ with $s(E) = v$ and $t(E) = \sigma(v)$. Let $n>1$ be minimal such that $\sigma^n(v) = v$. We set $D = \sigma^{n-1}(E)\dots\sigma(E)$. Then $E$ and $D$ satisfy the conditions of the previous proposition. Hence, $V$ does not have finite nice length.
        \end{proof}
        \begin{Claim}
            If $\degree(\pi_1) \geq 2$ then $V$ does not have finite nice length.
        \end{Claim}
        \begin{innerproof}[Proof]
            We fix an element $v\in (\Gamma_V)_0$ and two different elements $(v,w),(v,w')\in \pi_1^{-1}(v)$. Since $C$ is connected there is an undirected path and hence a string $\Tilde{E}$ from $(v,w)$ to $(v,w')$. We set $E = \pi_2(\Tilde{E})$. In particular $s(E) = w$ and $t(E) = w'$. The path $\pi_1(\Tilde{E})$ is a cycle at $v$. It acts on $\pi_1^{-1}(v)$ because $\pi_1$ is a covering map. Let $n$ be minimal such that $(\pi_1(\Tilde{E}))^n$ lifts to a path from $(v,w')$ to $(v,w)$. We call this path $D$. Then the strings $E$ and $D$ satisfy the conditions of the previous proposition. Hence $V$ does not have finite nice length.\qedhere
        \end{innerproof}
    \end{proof}
    The converse does not hold as the following example shows. 
    \begin{Example}
        An example of an absolutely indecomposable representation with infinite nice length was given by Jun and Sistko in \cite{JS2}. We give a different example here. We consider the following representation of $\bL_2$:
        \[\begin{tikzcd}[ampersand replacement=\&]
        	1 \& 2.
        	\arrow["b"{description}, color={rgb,255:red,214;green,92;blue,92}, curve={height=-9pt}, from=1-1, to=1-2]
        	\arrow["a"{description}, color={rgb,255:red,214;green,92;blue,92}, curve={height=-9pt}, from=1-2, to=1-1]
        	\arrow["c"{description}, color={rgb,255:red,92;green,92;blue,214}, from=1-2, to=1-1]
        \end{tikzcd}\]
        Here the two colours correspond to the two arrows of $\bL_2$. The string $ab$ satisfies the conditions of Proposition \ref{finite:criterion} hence $V$ does not have finite nice length. On the other hand its tensor square is given by
        \[\begin{tikzcd}[ampersand replacement=\&]
        	{(1,1)} \& {(2,2)} \& {(1,2)} \& {(2,1).}
        	\arrow[color={rgb,255:red,214;green,92;blue,92}, curve={height=-6pt}, from=1-1, to=1-2]
        	\arrow[color={rgb,255:red,214;green,92;blue,92}, curve={height=-6pt}, from=1-2, to=1-1]
        	\arrow[color={rgb,255:red,92;green,92;blue,214}, from=1-2, to=1-1]
        	\arrow[color={rgb,255:red,214;green,92;blue,92}, curve={height=-6pt}, from=1-3, to=1-4]
        	\arrow[color={rgb,255:red,214;green,92;blue,92}, curve={height=-6pt}, from=1-4, to=1-3]
        \end{tikzcd}\]
        In particular there is just one covering component and $V$ is absolutely indecomposable.
    \end{Example}
\section{Declarations}
\subsection{Acknowledgement}
This version of the article has been accepted for publication, after peer review but is not the Version of Record and does not
reflect post-acceptance improvements, or any corrections. The Version of Record is available online at:\\
http://dx.doi.org/10.1007/s10468-025-10326-9
\subsection{Ethical Approval}
not applicable
\subsection{Funding}
not applicable
\subsection{Availability of data and materials}
not applicable
\bibliographystyle{alpha}
\bibliography{references}

\end{document}